\newtheorem{definition}{Definition}[section]
\newtheorem{theorem}[definition]{Theorem}
\newtheorem{lemma}[definition]{Lemma}
\newtheorem{remark}[definition]{Remark}
\newtheorem{example}[definition]{Example}
\newtheorem{corollary}[definition]{Corollary}
\newtheorem{note}[definition]{Note}
\numberwithin{equation}{section}
\title{Common Fixed Points via Rational $\varphi$-Contractive Conditions in Orbitally Complete Metric Spaces}
\author{Babu G.V.R\textsuperscript{1}, Alemayehu G. Negash\textsuperscript{2,*}, M. L. Sandhya \textsuperscript{3},  Meaza F. Bogale\textsuperscript{4}\\
\small \textsuperscript{1,3}Department of Mathematics, Andhra University, India\\
\small \textsuperscript{1}Email: \texttt{gvr\_babu@hotmail.com} \\
\small \textsuperscript{3}Email: sandhya\_mudunur@yahoo.co.in\\
\small \textsuperscript{2,4}Department of Mathematics, Hampton University, USA\\
$^*$ \textit{Corresponding author.} \\
\small \textsuperscript{2}Email: \texttt{alemayehu.negash@hamptonu.edu}\\
\small \textsuperscript{4}Email: \texttt{meaza.fantahun@hamptonu.edu}
}
\date{}
\begin{document}

\maketitle

\begin{abstract}
This paper establishes the existence and uniqueness of common fixed points for three selfmaps $A$, $S$, and $T$ defined on an orbitally complete metric space. The maps are assumed to satisfy a generalized $\varphi$-contractive condition involving rational expressions. The analysis is carried out under mild continuity assumptions—namely, reciprocal continuity and compatibility (or compatibility of type (A)) between pairs of mappings. The results not only extend earlier fixed point theorems by Jaggi and Phaneendra et al., but also unify several classical results by employing a unified orbital framework. Additional corollaries and illustrative examples are provided, along with a convergence theorem for sequences of selfmaps.

\textbf{Key words and phrases:} Weakly commuting maps, orbitally complete metric space, reciprocal continuous, common fixed point.

\textbf{AMS(2000) Mathematics Subject Classification:} 47H10, 54H25.

\textbf{Corresponding author:} Alemayehu G. Negash
\end{abstract}

\section{INTRODUCTION}
Fixed point theory is a central area of nonlinear analysis with wide-ranging applications in various fields including differential equations, dynamic programming, and mathematical economics. A cornerstone result in this theory is the Banach Contraction Principle, which guarantees the existence and uniqueness of fixed points for contractive selfmaps on complete metric spaces.

Over the years, this foundational principle has been extended and generalized in many directions. One significant line of development concerns the study of common fixed points for multiple selfmaps. Results in this area are not only theoretically interesting but also essential in iterative methods and computational fixed point approximations.

In 1975, Jaggi\cite{Jaggi1975} introduced a framework for analyzing common fixed points of two selfmaps under contractive conditions in complete metric spaces. 

\begin{theorem}[Jaggi \cite{Jaggi1975}, Theorem 4, page. 227] \label{thm:1.1}
Let \(S\) and \(T\) be two selfmaps defined on a complete metric space \((X,d)\) satisfying the following conditions:
\begin{enumerate}
    \item[(i)] for some \(\alpha,\beta\in[0,1)\) with \(\alpha+\beta<1\),
    \begin{equation}\label{eq:1.1.1}
        d(Sx,Ty)\leq\alpha \frac{d(x,Sx)}{d(x,y)}\frac{d(y,Ty)}{d(x,y)}+\beta d(x,y), \text{ for all }x,y\in X, x\neq y;
    \end{equation}
    \item[(ii)] \(ST\) is continuous on \(X\); and
    \item[(iii)] there exists an \(x_{0}\in X\) such that in the sequence \(\{x_{n}\}\), where
    \[
        x_{n} = \begin{cases}
            Sx_{n-1}, & \text{when } n \text{ is odd}, \\
            Tx_{n-1}, & \text{when } n \text{ is even},
        \end{cases}
    \]
    \(x_{n}\neq x_{n+1}\) for all \(n\). Then \(S\) and \(T\) have a unique common fixed point.
\end{enumerate}
\end{theorem}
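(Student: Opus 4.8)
The plan is to follow the classical contraction-mapping template adapted to the two-map rational setting. First I would fix the point $x_0$ supplied by hypothesis (iii) and work with the interlaced sequence $\{x_n\}$, recording that $x_{2k+1}=Sx_{2k}$ and $x_{2k+2}=Tx_{2k+1}$, and that by (iii) no two consecutive terms coincide, so \eqref{eq:1.1.1} may legitimately be invoked at every step (in particular no denominator vanishes). Applying \eqref{eq:1.1.1} with $(x,y)=(x_{2k},x_{2k+1})$, and then, using symmetry of $d$, with $(x,y)=(x_{2k+2},x_{2k+1})$, the rational term simplifies at once because the factors $d(x,Sx)$ and $d(y,Ty)$ are themselves consecutive gaps of the sequence, so one of them cancels the denominator $d(x,y)$; this produces the estimate $d(x_n,x_{n+1})\le h\,d(x_{n-1},x_n)$ with $h=\beta/(1-\alpha)$, and $h<1$ precisely because $\alpha+\beta<1$. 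Iterating gives $d(x_n,x_{n+1})\le h^{n}d(x_0,x_1)$, and the standard triangle-inequality plus geometric-series estimate then shows $\{x_n\}$ is Cauchy; by completeness of $(X,d)$, $x_n\to z$ for some $z\in X$.

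Next I would identify $z$ first as a fixed point of $ST$ and then of each of $S$ and $T$. The key observation is that the odd-indexed subsequence is an orbit of $ST$: $x_{2k+3}=Sx_{2k+2}=S(Tx_{2k+1})=(ST)x_{2k+1}$. Since $x_{2k+1}\to z$ and $ST$ is continuous (hypothesis (ii)), $(ST)x_{2k+1}\to (ST)z$; but $(ST)x_{2k+1}=x_{2k+3}\to z$, so by uniqueness of limits $(ST)z=z$. To pass from this to $Tz=z$, put $u=Tz$, so that $Su=(ST)z=z$; if $u\ne z$ I would apply \eqref{eq:1.1.1} with $(x,y)=(u,z)$, observe that the three relevant distances $d(u,Su)$, $d(z,Tz)$, $d(Su,Tz)$ all reduce to $d(u,z)$, and obtain $d(u,z)\le(\alpha+\beta)d(u,z)$, contradicting $\alpha+\beta<1$. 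Hence $Tz=z$, and then $(ST)z=z$ collapses to $Sz=z$; so $z$ is a common fixed point of $S$ and $T$.

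For uniqueness, suppose $z'$ is another common fixed point with $z'\ne z$. Applying \eqref{eq:1.1.1} with $(x,y)=(z,z')$ annihilates the rational term, since $d(z,Sz)=d(z',Tz')=0$, and leaves $d(z,z')\le\beta\,d(z,z')$; because $\beta<1$ this forces $z=z'$.

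The conceptual content here is modest --- this is essentially the Banach iteration with a rational perturbation --- so the real work is careful bookkeeping. One must use hypothesis (iii) to certify that every invocation of \eqref{eq:1.1.1} along the orbit is valid ($x\ne y$, no vanishing denominator); one must select the subsequence that forms an orbit of $ST$ rather than of $TS$, so that it is exactly the continuity hypothesis actually available that gets used; and one must track the constants in the chain $(ST)z=z\Rightarrow Tz=z\Rightarrow Sz=z$, since there it is the contractive inequality, not continuity, that does the work. I expect this last step, together with choosing the correct subsequence, to be the only places where a careless argument would slip.
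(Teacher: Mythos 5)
Your argument is sound and is essentially Jaggi's original one; note, however, that the paper itself gives no proof of this statement (it is quoted as background), and its only internal connection to it is Note~\ref{note:3.9}, which asserts that it can be recovered from Theorems~\ref{thm:2.2} and~\ref{thm:2.3} by taking $\varphi(t)=\lambda t$. So your route is genuinely different from the paper's: you prove the result directly from the interlaced orbit (geometric contraction with ratio $h=\beta/(1-\alpha)$, completeness, continuity of $ST$ applied to the odd-indexed subsequence to get $(ST)z=z$, then the contractive inequality to force $Tz=z$, hence $Sz=z$, and uniqueness because the rational term vanishes at two common fixed points), whereas the paper would subsume it under its $\varphi$-contractive machinery. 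Your version is self-contained and uses exactly the stated hypotheses; the reduction sketched in Note~\ref{note:3.9} is not immediate, since the reciprocal-continuity and compatibility hypotheses of Theorem~\ref{thm:2.2} do not follow from continuity of $ST$ alone. You also handle the two points that actually need care: hypothesis (iii) licenses every application of \eqref{eq:1.1.1} along the orbit, and you choose the subsequence that is an orbit of $ST$ rather than $TS$.

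One caveat: all of your cancellations read the rational term as $\alpha\, d(x,Sx)\,d(y,Ty)/d(x,y)$, i.e.\ with a single factor of $d(x,y)$ in the denominator. As typeset in \eqref{eq:1.1.1} the term is $\alpha\,\frac{d(x,Sx)}{d(x,y)}\cdot\frac{d(y,Ty)}{d(x,y)}$, with $d(x,y)^{2}$ below. Under that literal reading your gap estimate would instead be $d(x_{n},x_{n+1})\le \alpha\, d(x_{n},x_{n+1})/d(x_{n-1},x_{n})+\beta\, d(x_{n-1},x_{n})$, which gives no contraction once $d(x_{n-1},x_{n})\le\alpha$, and the step forcing $Tz=z$ would only yield $d(u,z)\le\alpha+\beta\, d(u,z)$, not a contradiction. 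Your reading matches Jaggi's actual condition (and is the only dimensionally coherent one, consistent with how inequality \eqref{eq:1.5.1} is written), but you should state explicitly which form of the inequality you are using, since as printed in the statement your two key cancellations do not go through.
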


Later, Phaneendra and Swatmaram refined this approach by incorporating concepts like orbital completeness and weak commutativity, which allow for a more flexible treatment of mappings beyond the classical contractive setting.

\begin{theorem}[\cite{Phaneendra2007}, Theorem 2, page. 25] \label{thm:1.5}
Let \(A\), \(S\) and \(T\) be continuous selfmaps of \(X\) satisfying the following inequality: there exist nonnegative reals \(\alpha\), \(\beta\) and \(\gamma\) with \(\alpha+\beta<1\) and \(\alpha+\gamma<1\) such that
\begin{equation}\label{eq:1.5.1}
d(Sx,Ty)\leq\alpha d(Ax,Ay)+\beta \frac{d(Ax,Sx) d(Ay,Ty)}{d(Ax, Ay)}+\gamma \frac{d(Ax,Ty) d(Ay,Sx)}{d(Ax, Ay)}
\end{equation}
if \(Ax\neq Ay\) and \(d(Sx,Ty)=0\) if \(d(Ax,Ay)=0\) for all \(x,y\in X\).

Suppose there exists an orbit \(\mathcal{O}_{STA}(x_{0})\) of some \(x_{0}\in X\), given by \eqref{eq:1.2.1}. Then the sequence \(\{Ax_{n}\}_{n=1}^{\infty}\) is a Cauchy sequence in the orbit \(\mathcal{O}_{STA}(x_{0})\) of \(x_{0}\in X\).

Further, if \(X\) is orbitally complete at \(x_{0}\in X\) and \((A,S)\) and \((A,T)\) are weakly commuting pairs, then \(A\), \(S\) and \(T\) have a unique common fixed point.
\end{theorem}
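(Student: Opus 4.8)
The plan is to follow the standard three-step template for Jungck-type common fixed point theorems, adapted to the orbital framework. Write the orbit $\mathcal{O}_{STA}(x_0)$ as the sequence determined by $Ax_{2n+1}=Sx_{2n}$ and $Ax_{2n+2}=Tx_{2n+1}$, and abbreviate $d_n=d(Ax_n,Ax_{n+1})$. If $d_m=0$ for some $m$, the hypothesis ``$d(Sx,Ty)=0$ whenever $d(Ax,Ay)=0$'' propagates forward to give $d_n=0$ for all $n\ge m$, so $\{Ax_n\}$ is eventually constant and trivially Cauchy; hence we may assume $d_n>0$ for all $n$. In that case I would apply \eqref{eq:1.5.1} with $(x,y)=(x_{2n},x_{2n+1})$: since $Ax_{2n+1}=Sx_{2n}$ the factor $d(Ax_{2n+1},Sx_{2n})=0$, so the entire $\gamma$-term drops out, leaving $d_{2n+1}\le\alpha\,d_{2n}+\beta\,d_{2n+1}$, i.e. $d_{2n+1}\le\tfrac{\alpha}{1-\beta}\,d_{2n}$. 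Applying \eqref{eq:1.5.1} with $(x,y)=(x_{2n+2},x_{2n+1})$ kills the $\gamma$-term in the same way (now $d(Ax_{2n+2},Tx_{2n+1})=0$) and gives $d_{2n+2}\le\tfrac{\alpha}{1-\beta}\,d_{2n+1}$. With $k:=\alpha/(1-\beta)<1$ (this is where $\alpha+\beta<1$ enters) one gets $d_n\le k^n d_0$, so $\sum_n d_n<\infty$ and $\{Ax_n\}_{n=1}^\infty$ is Cauchy in the orbit --- which is the first assertion.

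For the second assertion, orbital completeness at $x_0$ yields a point $z$ with $Ax_n\to z$; in particular $Sx_{2n}=Ax_{2n+1}\to z$ and $Tx_{2n+1}=Ax_{2n+2}\to z$. Continuity of $A$ gives $A(Sx_{2n})=A(Ax_{2n+1})\to Az$ and $A(Tx_{2n+1})=A(Ax_{2n+2})\to Az$; continuity of $S$ and $T$ gives $S(Ax_{2n})\to Sz$ and $T(Ax_{2n+1})\to Tz$. Now weak commutativity of $(A,S)$ gives $d(ASx_{2n},SAx_{2n})\le d(Ax_{2n},Sx_{2n})=d_{2n}\to 0$, and feeding the three limits into the triangle inequality forces $Az=Sz$; the analogous argument with $(A,T)$ along the odd indices forces $Az=Tz$.

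It remains to show $Az=z$. I would apply \eqref{eq:1.5.1} with $(x,y)=(z,x_{2n+1})$. The equality $Az=Sz$ makes $d(Az,Sz)=0$, so the $\beta$-term vanishes identically. Letting $n\to\infty$ (using $Ax_{2n+1}\to z$ and $Tx_{2n+1}\to z$), and first disposing of the harmless possibility that $Ax_{2n+1}=Az$ for infinitely many $n$ --- which would force $Az=z$ at once --- the denominator $d(Az,Ax_{2n+1})$ stays bounded away from $0$, so the inequality passes to the limit as $d(Sz,z)\le\alpha\,d(Az,z)+\gamma\,d(z,Sz)$. Since $Az=Sz$ this reads $d(Sz,z)\le(\alpha+\gamma)\,d(Sz,z)$, and $\alpha+\gamma<1$ forces $Sz=z$; hence $z=Sz=Az=Tz$ is a common fixed point. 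Uniqueness is the same maneuver once more: if $z'$ is another common fixed point with $z\ne z'$, apply \eqref{eq:1.5.1} with $(x,y)=(z,z')$; then $d(Az,Sz)$ and $d(Az',Tz')$ both vanish, and the surviving terms give $d(z,z')\le(\alpha+\gamma)\,d(z,z')$, a contradiction.

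The main obstacle I anticipate is the bookkeeping around the rational terms. One must check that every denominator that appears in an invocation of \eqref{eq:1.5.1} is nonzero --- this is exactly why the degenerate cases $d_n=0$ and $Ax_{2n+1}=Az$ have to be isolated --- and one must justify passing to the limit inside the quotients, which is legitimate precisely because at each invocation the numerator of the remaining rational term converges while its denominator converges to a strictly positive number. The other delicate point is the second paragraph, where the conclusion $Az=Sz$ rests on recognizing $Az$ and $Sz$ as two limits of the \emph{same} sequence $\{ASx_{2n}\}$, so that continuity and weak commutativity have to be combined in the right order.
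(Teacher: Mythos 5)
Your proposal is correct, but it is not the route this paper takes: the paper never proves Theorem \ref{thm:1.5} directly --- it quotes it from Phaneendra--Swatmaram and only recovers it indirectly, via Note \ref{note:3.9}, as a corollary of the $\varphi$-contractive results (Lemma \ref{lem:2.1} together with Theorems \ref{thm:2.2} and \ref{thm:2.3}), whose proofs run through a max-type condition, an $\varepsilon$--subsequence contradiction argument for the Cauchy step, and reciprocal continuity plus compatibility in place of your continuity plus weak commutativity. Your direct argument is more elementary and, for the linear rational condition \eqref{eq:1.5.1}, cleaner: the observation that in each orbital application the $\gamma$-term vanishes and the $\beta$-term contributes only $\beta d_{n+1}$ gives the geometric ratio $d_{n+1}\le\frac{\alpha}{1-\beta}d_n$ at once (the paper's Lemma \ref{lem:2.1} must instead argue by contradiction with subsequences because $\varphi$ is not linear), and your coincidence step $Az=Sz=Tz$ uses exactly the classical combination of continuity of $A,S,T$ with weak commutativity along the orbit, which is closer to the cited original than to anything in this paper. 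Two small remarks. First, in the step proving $Az=z$, your claim that the denominator $d(Az,Ax_{2n+1})$ ``stays bounded away from $0$'' is only true when $Az\neq z$; since $Az=z$ is precisely the desired conclusion, you should open that step with ``if $Az=z$ we are done; otherwise $d(Az,Ax_{2n+1})\to d(Az,z)>0$,'' after which the limit passage is legitimate --- a one-line fix, not a gap. Second, your degenerate-case propagation ($d_m=0$ forces $d_n=0$ for all $n\ge m$) leans on the clause ``$d(Sx,Ty)=0$ whenever $d(Ax,Ay)=0$ for \emph{all} $x,y$,'' including $x=y$; this is exactly the point the paper's Example following Definition \ref{ex:1.6} is probing (there $Ax_n=Ax_{n+1}$ infinitely often yet the orbit does not stabilize), and that example violates the clause at $x=y$, so your proof of the statement \emph{as reproduced here} is consistent with it.
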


Motivated by these developments, we study a more general scenario involving three selfmaps $A$, $S$, and $T$ on a metric space. We impose a generalized contractive condition, known as the $\varphi$-contractive inequality, which subsumes a wide class of rational-type inequalities used in earlier literature. Additionally, we relax completeness requirements by considering orbital completeness and allow for weaker forms of continuity and compatibility.

Our aim is to establish the existence and uniqueness of a common fixed point under a set of generalized assumptions involving rational $\varphi$-contractive conditions. In addition, we extend our results to sequences of selfmaps and investigate the convergence behavior of their fixed point sequences.The results in this paper not only generalize but also unify several known fixed point theorems, providing a broader theoretical foundation for further study and application.

\section{Preliminaries}

\begin{definition}[\cite{Phaneendra2007}]
Let \(A\), \(S\) and \(T\) be three selfmaps of a metric space \((X,d)\) and \(x_{0}\in X\). If there exists a sequence \(\{x_{n}\}_{n=1}^{\infty}\subset X\) such that
\begin{equation}\label{eq:1.2.1}
Ax_{n} = \begin{cases}
Sx_{n-1}, & \text{if } n \text{ is odd}, \\
Tx_{n-1}, & \text{if } n \text{ is even},
\end{cases}
\end{equation}
for \(n=1,2,3,\ldots\), then \(\{Ax_{1},Ax_{2},\ldots\}\) is said to \textit{constitute \((S,T,A)\)-orbit or \((S,T,A)\)-orbit of \(x_{0}\in X\)}. We denote it by \(\mathcal{O}_{STA}(x_{0})\) and its closure is denoted by \(\overline{\mathcal{O}_{STA}(x_{0})}\).
\end{definition}

\begin{definition}[\cite{Phaneendra2007}]
A metric space \((X,d)\) is said to be \((S,T,A)\)-\textit{orbitally complete} at \(x_{0}\in X\) if every Cauchy sequence in \(\mathcal{O}_{STA}(x_{0})\) converges in \(X\).
\end{definition}

\begin{definition}[\cite{Sessa1982}] \label{ex:1.6}
Two selfmaps \(A\) and \(S\) of a metric space \((X,d)\) are said to be \textit{weakly commuting} on \(X\), if \(d(SAx,ASx)\leq d(Ax,Sx)\) for all \(x\in X\). We denote it by 'the pair \((A,S)\) is weakly commuting on \(X\)'.
\end{definition}

\begin{example}
Let \(X=\{0,1\}\) with the usual metric. Define \(A:X\to X\) by \(A0=1\), \(A1=0\). Define \(T:X\to X\) by \(Tx=x\) for all \(x\in X\). Suppose that \(S=A\).

Here \(\mathcal{O}_{STA}(0)=\{0,1,0,0,1,1,\ldots\}\) and \(\mathcal{O}_{STA}(1)=\{1,0,1,1,0,0,\ldots\}\). We observe that \(\mathcal{O}_{STA}(0)\) and \(\mathcal{O}_{STA}(1)\) have two limit points \(0\) and \(1\). The pairs of mappings \((A,S)\) and \((A,T)\) are weakly commuting pairs and the mappings
$A$, $S$ and $T$ satisfy inequality \eqref{eq:1.5.1} with any $\alpha\geq 0$, $\beta\geq 0$ and $\gamma\geq 0$ with $\alpha+\beta<1$ and $\alpha+\gamma<1$ so that $A$, $S$ and $T$ satisfy all the hypotheses of Theorem \ref{thm:1.5}. Observe that $A$, $S$ and $T$ do not have a common fixed point.
\end{example}

\textbf{Observation:} $Ax_{n}=Ax_{n+1}$ for infinitely many $n$ in Example \ref{ex:1.6}.

Given a sequence of selfmaps $\{S_{n}\}_{n=1}^{\infty}$ on a metric space $(X,d)$ with $\{S_{n}\}_{n=1}^{\infty}$ converges pointwise to $S$ on $X$ and $S_{n}u_{n}=u_{n}$ for all $n=1,2,3,\ldots$ with $\{u_{n}\}_{n=1}^{\infty}$ converges to $u$ as $n$ tends to infinity, it is natural to ask 'whether this $u$ is a fixed point of $S$'. Its converse is also interesting; i.e., if $Su=u$, then 'Is $\{u_{n}\}_{n=1}^{\infty}$ converges to $u$ as $n$ tends to infinity?' Related results were obtained by \cite{Bonsall1962}, \cite{Nadlar1969}, \cite{Fraser1969}, \cite{Furi1969}. In 1975, D.S. Jaggi \cite{Jaggi1975} proved the following theorem.

\begin{theorem}[\cite{Jaggi1975}, Theorem 5, P. 288]
Let $\{S_{n}\}_{n=1}^{\infty}$ be a sequence of selfmaps defined on a metric space $(X,d)$ with $S_{n}u_{n}=u_{n}$, $n=1,2,\ldots$. Assume that
\begin{enumerate}
    \item[(i)] there exists $\alpha,\beta\in[0,1)$ with $\alpha+\beta<1$ such that
    \begin{equation}\label{eq:1.7.1}
        d(S_{n}x,S_{n}y)\leq\alpha \frac{d(x,S_{n}x)}{d(x,y)}\frac{d(y,S_{n}y)}{d(x,y)}+\beta d(x,y)
    \end{equation}
    for all $x,y\in X$, $x\neq y$, $n=1,2,3,\ldots$ and
    \item[(ii)] $\{S_{n}\}$ converges pointwise to $S$. Then $u_{n}\to u$ if and only if $u$ is a fixed point of $S$.
\end{enumerate}
\end{theorem}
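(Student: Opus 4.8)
The plan is to establish the two implications separately, and in both the engine of the argument is a single structural observation: since $S_{n}u_{n}=u_{n}$ we have $d(u_{n},S_{n}u_{n})=0$, so whenever $u_{n}$ is fed in as one of the two variables of \eqref{eq:1.7.1} the whole rational term vanishes (regardless of $\alpha$), and the inequality degenerates to the Banach-type estimate
\[
d(S_{n}x,S_{n}u_{n})\le \beta\, d(x,u_{n}).
\]
This holds for $x\ne u_{n}$ directly from \eqref{eq:1.7.1}, and trivially when $x=u_{n}$; hence it holds for every $x\in X$ and every $n$. Notice that only $\beta<1$ will be used, not the full strength of $\alpha+\beta<1$.

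For the forward implication, assume $u_{n}\to u$ and estimate $d(Su,u)$ by inserting $S_{n}u$ and $S_{n}u_{n}=u_{n}$ through the triangle inequality:
\[
d(Su,u)\le d(Su,S_{n}u)+d(S_{n}u,S_{n}u_{n})+d(u_{n},u).
\]
The middle term is at most $\beta\, d(u,u_{n})$ by the degenerate estimate above; the first term tends to $0$ by pointwise convergence of $\{S_{n}\}$ to $S$ evaluated at the single point $u$; and the last term tends to $0$ by hypothesis. Letting $n\to\infty$ forces $d(Su,u)=0$, i.e.\ $Su=u$.

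For the reverse implication, assume $Su=u$ and start from
\[
d(u_{n},u)=d(S_{n}u_{n},u)\le d(S_{n}u_{n},S_{n}u)+d(S_{n}u,u).
\]
The first term on the right is bounded by $\beta\, d(u_{n},u)$ by the same degenerate estimate (now with the variables in the other order), while $d(S_{n}u,u)=d(S_{n}u,Su)\to 0$ by pointwise convergence at $u$. Rearranging gives $(1-\beta)\,d(u_{n},u)\le d(S_{n}u,Su)$, and since $\beta<1$ we conclude $d(u_{n},u)\le (1-\beta)^{-1}d(S_{n}u,Su)\to 0$, so $u_{n}\to u$.

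The proof is short and there is no substantial obstacle once the collapse of the rational term is noticed; the only places that need a line of care are (a) confirming that the degenerate estimate also covers the trivial case $u_{n}=u$, so that one never actually divides by zero, and (b) being careful that pointwise convergence of $\{S_{n}\}$ is invoked only at the fixed point $u$, which is precisely the one point at which it is available. One may also remark in passing that condition (i) already forces each $u_{n}$ to be the \emph{unique} fixed point of $S_{n}$, although this uniqueness plays no role in the statement.
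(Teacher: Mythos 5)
Your proof is correct, but note that the paper itself offers no proof of this statement: it is quoted as background from Jaggi, and the nearest in-paper argument is the proof of the convergence theorem in Section 5 (Theorem \ref{thm:4.1}), which generalizes it to the $\varphi$-contractive inequality \eqref{eq:1.13.2} with an auxiliary map $A$. Your route shares the same skeleton as that proof --- insert $S_{n}u$ and $u_{n}=S_{n}u_{n}$ via the triangle inequality and invoke pointwise convergence only at the single point $u$ --- but your key simplification, the collapse of the rational term of \eqref{eq:1.7.1} to zero whenever one argument is $u_{n}$, turns the hypothesis into the Banach-type estimate $d(S_{n}x,S_{n}u_{n})\le\beta\,d(x,u_{n})$ and thereby removes everything that makes the Section 5 proof longer: no continuity of an auxiliary map, no contradiction argument with $\varphi(t)<t$, and in the converse direction a clean rearrangement $(1-\beta)\,d(u_{n},u)\le d(S_{n}u,Su)$ instead of passing to a limit inside $\varphi$. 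The trade-off is generality: in the paper's setting the maximum of three ratios does not vanish entirely at $u_{n}$, so the $\varphi$ machinery and the continuity of $A$ are genuinely needed there, whereas your argument exploits the special product form of Jaggi's rational term and, as you observe, uses only $\beta<1$ rather than the full $\alpha+\beta<1$. Your two points of care (the trivial case $x=u_{n}$, and invoking pointwise convergence only at $u$) are exactly the right ones, and your side remark that each $u_{n}$ is the unique fixed point of $S_{n}$ is correct though not needed.
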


\begin{definition}[\cite{Jungck1986}]
Two selfmaps $A$ and $S$ of a metric space $(X,d)$ are said to be \textit{compatible} if $\lim\limits_{n\to\infty}d(SAx_{n},ASx_{n})=0$ whenever $\{x_{n}\}$ is a sequence in $X$ such that $\lim\limits_{n\to\infty}Ax_{n}=\lim\limits_{n\to\infty}Sx_{n}=t$ for some $t$ in $X$. We denote it by 'the pair $(A,S)$ is compatible'.
\end{definition}

We note that every weakly commuting pair of selfmaps is compatible but its converse need not be true \cite{Jungck1986}.

\begin{definition}[\cite{Jungck1993}]
Two selfmaps $A$ and $S$ of a metric space $(X,d)$ are said to be \textit{compatible of type (A)} if $\lim\limits_{n\to\infty}d(SAx_{n},S^{2}x_{n})=0$ and $\lim\limits_{n\to\infty}d(SAx_{n},A^{2}x_{n})=0$ whenever $\{x_{n}\}$ is a sequence in $X$ such that $\lim\limits_{n\to\infty}Ax_{n}=\lim\limits_{n\to\infty}Sx_{n}=t$ for some $t$ in $X$. We denote it by 'the pair $(A,S)$ is compatible of type (A)'.
\end{definition}

Examples are given to show that the two concepts of compatibility are independent \cite{Jungck1993}.

\begin{lemma}
If $A$ and $S$ are either compatible maps or compatible maps of type (A), then they commute at their coincidence point \cite{Jungck1986,Jungck1993}.
\end{lemma}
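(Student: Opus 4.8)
The plan is to reduce the statement to the defining relations of the two notions of compatibility by feeding them the simplest admissible sequence, namely the constant sequence located at the coincidence point. Neither continuity nor any contractive hypothesis is needed.

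First I would let $u\in X$ be a coincidence point of $A$ and $S$, so that $Au=Su$, and set $t:=Au=Su$. Taking $x_{n}=u$ for all $n$, we trivially have $\lim_{n\to\infty}Ax_{n}=\lim_{n\to\infty}Sx_{n}=t$, so $\{x_{n}\}$ is an admissible sequence in the hypotheses of both definitions with limit $t$.

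Next, if $(A,S)$ is compatible, the definition yields $\lim_{n\to\infty}d(SAx_{n},ASx_{n})=0$; since $SAx_{n}=SAu$ and $ASx_{n}=ASu$ are constant in $n$, this forces $d(SAu,ASu)=0$, i.e. $SAu=ASu$. If instead $(A,S)$ is compatible of type (A), the definition yields $\lim_{n\to\infty}d(SAx_{n},A^{2}x_{n})=0$, hence $SAu=A^{2}u$; invoking the coincidence identity $Au=Su$ we rewrite $A^{2}u=A(Au)=A(Su)=ASu$, so again $SAu=ASu$.

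The only step that is not completely mechanical is this last rewriting in the type (A) case, where the coincidence relation $Au=Su$ must be used to turn $A^{2}u$ into $ASu$; the companion relation $\lim_{n\to\infty}d(SAx_{n},S^{2}x_{n})=0$ gives $SAu=S^{2}u=S(Su)=S(Au)=SAu$, which is consistent but superfluous. Beyond this the argument is routine, so I do not anticipate a genuine obstacle.
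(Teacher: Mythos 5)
Your proof is correct: the paper states this lemma without proof, merely citing Jungck's papers, and your constant-sequence argument (feed $x_n\equiv u$ with $Au=Su$ into the compatibility definitions, then use $Au=Su$ to rewrite $A^{2}u=ASu$ in the type (A) case) is exactly the standard argument from those references. No gap; the only caveat worth noting is that the paper's stated type (A) condition pairs both limits with $SAx_n$, and your remark that the $S^{2}$-limit is superfluous is accurate under that formulation.
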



\begin{definition}[\cite{Pant1998}]
Two mappings $A$ and $S$ of a metric space $(X,d)$ are called \textit{reciprocal continuous} on $X$ if, $\lim\limits_{n\to\infty}ASx_{n}=At$ and $\lim\limits_{n\to\infty}SAx_{n}=St$ whenever $\{x_{n}\}$ is a sequence in $X$ such that $\lim\limits_{n\to\infty}Ax_{n}=\lim\limits_{n\to\infty}Sx_{n}=t$ for some $t$ in $X$. We denote it by 'the pair $(A,S)$ is reciprocal continuous on $X$'.
\end{definition}

We observe that if $A$ and $S$ are continuous then they are reciprocal continuous. But its converse need not be true \cite{Pant1998}.

Let $\Phi$ be the set of all continuous selfmaps $\varphi:\mathbb{R}_{+}\to \mathbb{R}_{+}$ satisfying
\begin{enumerate}
    \item[(i)] $\varphi$ is monotonically increasing; and
    \item[(ii)] $0\leq\varphi(t)<t$ for all $t>0$.
\end{enumerate}

\begin{lemma}[\cite{Singh1977}]
For any $t\in(0,\infty)$, if $\varphi(t)<t$, then $\lim_{n\to\infty}\varphi^{n}(t)=0$, where $\varphi^{n}$ denotes the $n$-times repeated composition of $\varphi$ with itself.
\end{lemma}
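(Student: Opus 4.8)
The plan is to track the sequence of iterates $t_{n} := \varphi^{n}(t)$, show it is nonincreasing and bounded below (hence convergent), and then pin down its limit using the continuity of $\varphi$ together with the strict-decrease property $\varphi(s)<s$ built into the class $\Phi$.

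First I would record two elementary consequences of $\varphi\in\Phi$. Since $\varphi$ maps $\mathbb{R}_{+}$ into $\mathbb{R}_{+}$, every $t_{n}$ is nonnegative; and since $0\le\varphi(s)<s$ for all $s>0$, continuity of $\varphi$ at $0$ forces $\varphi(0)=0$. Next, by a one-line induction: if $t_{n}>0$ then $t_{n+1}=\varphi(t_{n})<t_{n}$, while if $t_{n}=0$ then $t_{n+1}=\varphi(0)=0$. In either case $t_{n+1}\le t_{n}$, so $\{t_{n}\}$ is a nonincreasing sequence bounded below by $0$ and therefore converges to some $L\ge 0$.

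It then remains to show $L=0$. Since $\varphi$ is continuous and $t_{n}\to L$, we get $\varphi(L)=\lim_{n\to\infty}\varphi(t_{n})=\lim_{n\to\infty}t_{n+1}=L$. If we had $L>0$, the defining inequality of $\Phi$ would give $\varphi(L)<L$, contradicting $\varphi(L)=L$; hence $L=0$, i.e. $\varphi^{n}(t)\to 0$, which is the assertion. Note that monotonicity of $\varphi$ is not actually used here, only its continuity and the strict-decrease condition $\varphi(s)<s$.

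There is no serious obstacle in this argument. The only points requiring a moment's care are the degenerate possibility that $t_{N}=0$ for some finite $N$ (handled once one observes $\varphi(0)=0$, so that the sequence stays at $0$ thereafter and the conclusion is trivial), and the passage to the limit inside $\varphi$, which is legitimate precisely because $\varphi\in\Phi$ is assumed continuous.
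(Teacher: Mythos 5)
Your proof is correct: the monotone-decreasing iterates, convergence to some $L\ge 0$, and the continuity argument $\varphi(L)=L$ contradicting $\varphi(L)<L$ constitute the standard argument for this result, and your observation that only continuity and $\varphi(s)<s$ are needed (not monotonicity) is accurate. The paper itself states this lemma without proof, citing Singh and Meade, so there is nothing to contrast with; your argument matches the classical one.
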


\begin{definition} \label{def:1.3}
Let $(X,d)$ be a metric space and $S,T,A:X\to X$. If there exists a $\varphi\in\Phi$ such that
\begin{equation}\label{eq:1.13.1}
d(Sx,Ty)\leq\varphi\left(\max\left\{d(Ax,Ay),\frac{d(Ax,Sx)}{d(Ax,Ay)}, \frac{d(Ax,Ty)}{d(Ax,Ay)}\right\}\right)
\end{equation}
if $Ax\neq Ay$ and $d(Sx,Ty)=0$ if $d(Ax,Ay)=0$ for all $x,y\in X$, then we call the mappings $S$, $T$ and $A$ satisfy '$\varphi$-\textit{contractive inequality involving rational expressions}'.
\end{definition}

In Definition \ref{def:1.3}, if $S=T$, then the inequality \eqref{eq:1.13.1} becomes
\begin{equation}\label{eq:1.13.2}
d(Sx,Sy)\leq\varphi\left(\max\left\{d(Ax,Ay),\frac{d(Ax,Sx)}{d(Ax,Ay)}, \frac{d(Ax,Sy)}{d(Ax,Ay)}\right\}\right)
\end{equation}
if $Ax\neq Ay$ and $d(Sx,Sy)=0$ if $d(Ax,Ay)=0$ for all $x,y\in X$, then we call the mappings $S$ and $A$ satisfy '$\varphi$-\textit{contractive inequality involving rational expressions}'. In addition, if $A$ is the identity mapping on $X$, then the inequality \eqref{eq:1.13.1} becomes
\begin{equation}\label{eq:1.13.3}
d(Sx,Sy)\leq\varphi\left(\max\left\{d(x,y),\frac{d(x,Sx)}{d(x,y)}, \frac{d(x,Sy)}{d(x,y)}\right\}\right)
\end{equation}
if $x\neq y$ and $d(Sx,Sy)=0$ if $d(x,y)=0$ for all $x,y\in X$, then we call the mapping $S$ satisfies '$\varphi$-\textit{contractive inequality involving rational expressions}'.

We denote the set of all positive integers by $\mathbb{N}$ and $\mathbb{R}_{+}=[0,\infty)$.

In this paper we prove the existence of common fixed points for three selfmaps $A$, $S$ and $T$ defined on an orbitally complete metric space under the
Under the assumption that (i) $A$, $S$ and $T$ satisfy $\varphi$-contractive inequality (Definition \ref{def:1.3}, (ii) the pairs $(A,S)$ and $(A,T)$ are reciprocal continuous and (iii) the pairs $(A,S)$ and $(A,T)$ are either compatible or compatible of type $(A)$. Also it is extended to a sequence of selfmaps. In section 4, we write some corollaries and provide examples in support of our main results. In section 5, we prove a result on the convergence of sequence of common fixed points. Our main results generalize and modify the results of \cite{Jaggi1975} and \cite{Phaneendra2007}.

\section{MAIN RESULTS}
To prove our main results, we need the following lemma which shows that the sequence $\{Ax_{n}\}$ defined by \eqref{eq:1.2.1} is Cauchy.

\begin{lemma}\label{lem:2.1}
Let $(X,d)$ be a metric space and let $S,T,A:X\to X$. Suppose that there exists a $\varphi\in\Phi$ such that the selfmappings $S$, $T$ and $A$ satisfy $\varphi$-contractive inequality \eqref{eq:1.13.1}. Suppose also that there exists an orbit $\mathcal{O}_{STA}(x_{0})$ of some $x_{0}\in X$ given by \eqref{eq:1.2.1}. Then the sequence $\{Ax_{n}\}$ is a Cauchy sequence in the orbit $\mathcal{O}_{STA}(x_{0})$.
\end{lemma}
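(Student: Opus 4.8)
The plan is to follow the classical two–stage scheme for contractive orbits: first show that the gaps between consecutive terms of $\{Ax_n\}$ contract under $\varphi$, and then upgrade this to the Cauchy property. Throughout, set $d_n=d(Ax_n,Ax_{n+1})$ for $n\ge 0$. If $d_{n_0}=0$ for some $n_0$, i.e.\ $Ax_{n_0}=Ax_{n_0+1}$, then the ``$d(Sx,Ty)=0$ whenever $d(Ax,Ay)=0$'' clause of Definition~\ref{def:1.3} forces $d_{n_0+1}=0$, and inductively the orbit is eventually constant, hence trivially Cauchy; so we may assume $d_n>0$ for every $n$.

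\textbf{Step 1 (consecutive gaps contract).} Fix $n\ge 1$. By the parity of $n$, exactly one of $S,T$ produces $Ax_n$ and the other produces $Ax_{n+1}$, so $d_n$ equals $d(Sx_{n-1},Tx_n)$ when $n$ is odd and $d(Sx_n,Tx_{n-1})$ when $n$ is even. Apply \eqref{eq:1.13.1} to the corresponding pair (legitimate since $d_{n-1}>0$). Using the orbit relations $Sx_{n-1}=Ax_n$ and $Tx_{n-1}=Ax_n$, the distances appearing in the numerators of the rational expressions reduce to one of $d(Ax_{n-1},Ax_n)=d_{n-1}$, $d(Ax_n,Ax_{n+1})=d_n$, or $d(Ax_n,Ax_n)=0$, so the argument of $\varphi$ collapses to $\max\{d_{n-1},d_n\}$ and $d_n\le\varphi(\max\{d_{n-1},d_n\})$. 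If $d_n\ge d_{n-1}$ this would give $d_n\le\varphi(d_n)<d_n$ (as $d_n>0$), a contradiction; hence $d_n<d_{n-1}$ and, taking the maximum, $d_n\le\varphi(d_{n-1})$ for all $n\ge 1$. Iterating, $d_n\le\varphi^n(d_0)$, and since $d_0>0$ with $\varphi(d_0)<d_0$ the cited lemma of Singh gives $\varphi^n(d_0)\to 0$, so $d_n\to 0$.

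\textbf{Step 2 (Cauchy).} Since $\Phi$ carries no summability hypothesis, $d_n\to 0$ is not by itself enough, so suppose for contradiction that $\{Ax_n\}$ is not Cauchy. Then there is $\varepsilon>0$ and integers $n_k<m_k$ with $n_k\to\infty$ and $d(Ax_{m_k},Ax_{n_k})\ge\varepsilon$; moreover, shifting a wrongly–parented index by $\pm1$ changes the relevant distance by at most $d_j\to 0$, so at the cost of replacing $\varepsilon$ by $\varepsilon/2$ we may take $m_k$ odd, $n_k$ even, and $m_k$ minimal with $d(Ax_{m_k},Ax_{n_k})\ge\varepsilon$, whence $d(Ax_{m_k-2},Ax_{n_k})<\varepsilon$ for $k$ large. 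Combining these with the triangle inequality and $d_j\to 0$ gives $d(Ax_{m_k},Ax_{n_k})\to\varepsilon$, and likewise $d(Ax_{m_k-1},Ax_{n_k})$, $d(Ax_{m_k-1},Ax_{n_k-1})$, and $d(Ax_{n_k-1},Ax_{m_k})$ all tend to $\varepsilon$; in particular $Ax_{m_k-1}\ne Ax_{n_k-1}$ for $k$ large, so \eqref{eq:1.13.1} applies to $(x_{m_k-1},x_{n_k-1})$. By the chosen parities $d(Ax_{m_k},Ax_{n_k})=d(Sx_{m_k-1},Tx_{n_k-1})$, while the argument of $\varphi$ on the right of \eqref{eq:1.13.1} equals $\max\{\,d(Ax_{m_k-1},Ax_{n_k-1}),\ \frac{d_{m_k-1}\,d_{n_k-1}}{d(Ax_{m_k-1},Ax_{n_k-1})},\ \frac{d(Ax_{m_k-1},Ax_{n_k})\,d(Ax_{n_k-1},Ax_{m_k})}{d(Ax_{m_k-1},Ax_{n_k-1})}\,\}$, whose limit is $\max\{\varepsilon,0,\varepsilon\}=\varepsilon$ because the denominator stays bounded away from $0$. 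Letting $k\to\infty$ and using continuity of $\varphi$ yields $\varepsilon\le\varphi(\varepsilon)<\varepsilon$, a contradiction. Hence $\{Ax_n\}$ is Cauchy in $\mathcal{O}_{STA}(x_0)$, proving the lemma.

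\textbf{Expected main obstacle.} The only genuine subtlety is the parity bookkeeping in Step~2: the $\varphi$-contractive inequality controls only distances of the form $d(Sx,Ty)$, so the extracted subsequences must have opposite parities; this is arranged by the $\pm1$ index shift described above, whose error is absorbed by $d_j\to 0$. A secondary point requiring care is that the denominators $d(Ax_{m_k-1},Ax_{n_k-1})$ of the rational terms must not degenerate in the limit, which is precisely why we track that they converge to $\varepsilon>0$ rather than merely staying positive.
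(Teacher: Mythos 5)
Your proof is correct and follows essentially the same route as the paper: the same gap-contraction estimate $d_n\le\varphi(d_{n-1})$ with Singh's lemma, followed by the same $\varepsilon\le\varphi(\varepsilon)$ contradiction in the non-Cauchy case; the only difference is index bookkeeping (you arrange $m_k$ odd, $n_k$ even and apply \eqref{eq:1.13.1} directly at the pair $(x_{m_k-1},x_{n_k-1})$, whereas the paper works with the even-indexed subsequence and inserts one extra triangle-inequality step before applying the inequality at $(x_{2m_k},x_{2n_k-1})$). Note also that, like the paper's own computations, you read \eqref{eq:1.13.1} with the Jaggi-type product numerators $d(Ax,Sx)\,d(Ay,Ty)$ and $d(Ax,Ty)\,d(Ay,Sx)$, which is the form actually used in the paper even though Definition \ref{def:1.3} displays the terms without the products.
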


\begin{proof}
Suppose that the sequence $\{Ax_{n}\}$ is given by \eqref{eq:1.2.1}.

If $Ax_{2n-1}=Ax_{2n}$ for some $n$, then $d(Ax_{2n},Ax_{2n-1})=0$ and hence $d(Sx_{2n},Tx_{2n-1})=0$. This implies that $Sx_{2n}=Tx_{2n-1}$, i.e., $Ax_{2n+1}=Ax_{2n}$. Then $d(Ax_{2n},Ax_{2n+1})=0$ and hence $d(Sx_{2n},Tx_{2n+1})=0$. This implies that $Sx_{2n}=Tx_{2n+1}$, i.e., $Ax_{2n+1}=Ax_{2n+2}$. Hence inductively we get $Ax_{2n}=Ax_{2n+k}$ for $k=1,2,3,\ldots$. Hence $\{Ax_{m}\}_{m\geq n}$ is a constant sequence and hence Cauchy.

Now, assume that $Ax_{n}\neq Ax_{n+1}$ for all $n$. Without loss of generality we assume $n$ is even. Taking $x=x_{n+1}$ and $y=x_{n+2}$ in \eqref{eq:1.13.1}, we get
\begin{align*}
d(Ax_{n+1},Ax_{n+2}) &= d(Sx_{n},Tx_{n+1}) \\
&\leq \varphi\left(\max\left\{d(Ax_{n},Ax_{n+1}),\frac{d(Ax_{n},Sx_{n})}{d(Ax_{n},Ax_{n+1})}Tx_{n+1}),\right.\right. \\
&\quad \left.\left.\frac{d(Ax_{n},Tx_{n+1})}{d(Ax_{n},Ax_{n+1})}\frac{d(Ax_{n+1},Sx_{n})}{d(Ax_{n},Ax_{n+1})}\right\}\right) \\
&= \varphi\left(\max\left\{d(Ax_{n},Ax_{n+1}),\frac{d(Ax_{n},Ax_{n+1})}{d(Ax_{n},Ax_{n+1})}\frac{d(Ax_{n+1},Ax_{n+2})}{d(Ax_{n},Ax_{n+1})},\right.\right. \\
&\quad \left.\left.\frac{d(Ax_{n},Ax_{n+2})}{d(Ax_{n},Ax_{n+1})}\frac{d(Ax_{n+1},Ax_{n+1})}{d(Ax_{n},Ax_{n+1})}\right\}\right) \\
&= \varphi\left(\max\left\{d(Ax_{n},Ax_{n+1}),d(Ax_{n+1},Ax_{n+2})\right\}\right).
\end{align*}

If $\max\{d(Ax_{n},Ax_{n+1}),d(Ax_{n+1},Ax_{n+2})\}=d(Ax_{n+1},Ax_{n+2})$, then we get
\begin{equation}\label{eq:2.1.1}
d(Ax_{n+1},Ax_{n+2}) \leq \varphi(d(Ax_{n},Ax_{n+1})).
\end{equation}

But again from \eqref{eq:1.13.1} with $x=x_{n}$ and $y=x_{n+1}$, it follows that
\begin{align*}
d(Ax_{n},Ax_{n+1}) &= d(Ax_{n+1},Ax_{n}) = d(Sx_{n},Tx_{n-1}) \\
&\leq \varphi\left(\max\left\{d(Ax_{n},Ax_{n-1}), \frac{d(Ax_{n},Sx_{n})}{d(Ax_{n},Ax_{n-1})}d(x_{n-1},Tx_{n-1}),\right.\right. \\
&\quad \left.\left.\frac{d(Ax_{n},Tx_{n-1})}{d(Ax_{n},Ax_{n-1})}d(x_{n-1},Sx_{n})\right\}\right) \\
&= \varphi\left(\max\left\{d(Ax_{n},Ax_{n-1}), \frac{d(Ax_{n},Ax_{n+1})}{d(Ax_{n},Ax_{n-1})}d(x_{n-1},Ax_{n}),\right.\right. \\
&\quad \left.\left.\frac{d(Ax_{n},Ax_{n})}{d(Ax_{n},Ax_{n-1})}d(x_{n-1},Ax_{n+1})\right\}\right) \\
&= \varphi\left(\max\left\{d(Ax_{n},Ax_{n+1}), d(Ax_{n-1},Ax_{n})\right\}\right) \\
&= \varphi(d(Ax_{n-1},Ax_{n})).
\end{align*}

Hence,

\begin{equation}\label{eq:2.1.2}
d(Ax_{n},Ax_{n+1}) \leq \varphi(d(Ax_{n-1},Ax_{n})) \text{ for } n=1,2,3,\ldots.
\end{equation}

Since $\varphi(t)<t$ for $t>0$, from \eqref{eq:2.1.2}, we have 
\begin{equation}\label{eq:2.1.3}
    \{d(Ax_{n},Ax_{n+1})\}_{n=1}^{\infty}
\end{equation}

is a decreasing sequence of reals.
Thus, from repeated application of \eqref{eq:2.1.2} and monotone increasing property of $\varphi$, we get

\begin{equation}\label{eq:2.1.4}
d(Ax_{n+1},Ax_{n+2}) \leq \varphi^{n}(d(Ax_{1},Ax_{2})), \quad n=1,2,3,\ldots.
\end{equation}

Letting $n\to\infty$ in \eqref{eq:2.1.4}, by Lemma 1.12, we get

\begin{equation}\label{eq:2.1.5}
d(Ax_{n+1},Ax_{n+2}) \to 0.
\end{equation}

Thus, from \eqref{eq:2.1.3}, \eqref{eq:2.1.4} and \eqref{eq:2.1.5}, to show that $\{Ax_{n}\}$ is Cauchy, it is sufficient to show $\{Ax_{2n}\}$ is Cauchy. Otherwise, there exists an $\varepsilon>0$ and there exist sequences $\{m_{k}\}$ and $\{n_{k}\}$ with $m_{k}>n_{k}>k$ such that

\begin{equation}\label{eq:2.1.6}
d(Ax_{2m_{k}},Ax_{2n_{k}}) \geq \varepsilon \qquad \text{and} \qquad d(Ax_{2m_{k}-2},Ax_{2n_{k}}) < \varepsilon.
\end{equation}

Now for each positive integer $k$,

\begin{align*}
\varepsilon &\leq d(Ax_{2m_{k}},Ax_{2n_{k}}) \\
&\leq d(Ax_{2m_{k}},Ax_{2m_{k}-1}) + d(Ax_{2m_{k}-1},Ax_{2m_{k}-2}) \\
&\quad + d(Ax_{2m_{k}-2},Ax_{2n_{k}}).
\end{align*}

On taking limits as $k\to\infty$, and using \eqref{eq:2.1.5} and \eqref{eq:2.1.6}, we have

\begin{equation}\label{eq:2.1.7}
\lim_{n\to\infty} d(Ax_{2m_{k}},Ax_{2n_{k}}) = \varepsilon.
\end{equation}

Now, from the triangle inequality, we have

\[
|d(Ax_{2m_{k}},Ax_{2n_{k}-1}) - d(Ax_{2m_{k}},Ax_{2n_{k}})| \leq d(Ax_{2n_{k}},Ax_{2n_{k}-1});
\]

On taking limits as $k\to\infty$, and using \eqref{eq:2.1.5} and \eqref{eq:2.1.7}, we have

\begin{equation}\label{eq:2.1.8}
\lim_{k\to\infty} d(Ax_{2m_{k}},Ax_{2n_{k}-1}) = \varepsilon.
\end{equation}

Again from the triangle inequality, we have

\begin{align*}
|d(Ax_{2m_{k}+1},Ax_{2n_{k}-1}) - d(Ax_{2m_{k}},Ax_{2n_{k}})| &\leq d(Ax_{2n_{k}-1},Ax_{2n_{k}}) \\
&\quad + d(Ax_{2m_{k}+1},Ax_{2m_{k}}).
\end{align*}

On taking limits as $k\to\infty$, and using \eqref{eq:2.1.5} and \eqref{eq:2.1.7}, we have

\begin{equation}\label{eq:2.1.9}
\lim_{k\to\infty} d(Ax_{2m_{k}+1},Ax_{2n_{k}-1}) = \varepsilon.
\end{equation}

Now

\begin{align*}
d(Ax_{2m_{k}},Ax_{2n_{k}}) &\leq d(Ax_{2m_{k}},Ax_{2m_{k}+1}) + d(Ax_{2m_{k}+1},Ax_{2n_{k}}) \\
&= d(Ax_{2m_{k}},Ax_{2m_{k}+1}) + d(Sx_{2m_{k}},Tx_{2n_{k}-1}) \\
&\leq d(Ax_{2m_{k}},Ax_{2m_{k}+1}) \\
&\quad + \varphi\left(\max\left\{d(Ax_{2m_{k}},Ax_{2n_{k}-1}),\right.\right. \\
&\quad \left.\left.\frac{d(Ax_{2m_{k}},Sx_{2m_{k}})}{d(Ax_{2m_{k}},Ax_{2n_{k}-1})}, \frac{d(Ax_{2m_{k}},Tx_{2n_{k}-1})}{d(Ax_{2m_{k}},Ax_{2n_{k}-1})} \frac{d(Ax_{2n_{k}-1},Sx_{2m_{k}})}{d(Ax_{2m_{k}},Ax_{2n_{k}-1})}\right\}\right) \\
&= d(Ax_{2m_{k}},Ax_{2m_{k}+1}) \\
&\quad + \varphi\left(\max\left\{d(Ax_{2m_{k}},Ax_{2n_{k}-1}), \frac{d(Ax_{2m_{k}},Ax_{2m_{k}+1})}{d(Ax_{2m_{k}},Ax_{2n_{k}-1})},\right.\right. \\
&\quad \left.\left.\frac{d(Ax_{2m_{k}},Ax_{2n_{k}})}{d(Ax_{2m_{k}},Ax_{2n_{k}-1})} \frac{d(Ax_{2n_{k}-1},Ax_{2m_{k}+1})}{d(Ax_{2m_{k}},Ax_{2n_{k}-1})}\right\}\right).
\end{align*}

Letting $n\to\infty$, using the continuity of $\varphi$, and using \eqref{eq:2.1.5}, \eqref{eq:2.1.7}, \eqref{eq:2.1.8} and \eqref{eq:2.1.9} we get

\[
\varepsilon \leq 0 + \varphi\left(\max\left\{\varepsilon, \tfrac{0}{\varepsilon}, \tfrac{\varepsilon^{2}}{\varepsilon}\right\}\right) = \varphi(\varepsilon),
\]

a contradiction. Thus $\{Ax_{2n}\}$ is Cauchy and hence $\{Ax_{n}\}$ is a Cauchy sequence. Thus, Lemma \ref{lem:2.1} follows.
\end{proof}

\begin{theorem}\label{thm:2.2}
Let $(X,d)$ be a metric space and let $S,T,A:X\to X$. Suppose that there exists a $\varphi\in\Phi$ such that the selfmappings $S$, $T$ and $A$ satisfy $\varphi$-contractive inequality \eqref{eq:1.13.1}. Suppose also that there exists an orbit $\mathcal{O}_{STA}(x_{0})$ of some $x_{0}\in X$ given by \eqref{eq:1.2.1}. Further assume that
\begin{enumerate}
    \item[(3.2.1)] $X$ is orbitally complete at $x_{0}\in X$;
    \item[(3.2.2)] the pairs $(A,S)$ and $(A,S)$ are reciprocal continuous.
\end{enumerate}
If the pairs $(A,S)$ and $(A,T)$ are compatible, then $S$, $T$ and $A$ have a unique common fixed point.
\end{theorem}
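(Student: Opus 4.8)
The plan is to run the classical chain ``Cauchy orbit $\Rightarrow$ limit $\Rightarrow$ coincidence point $\Rightarrow$ common fixed point,'' feeding Lemma~\ref{lem:2.1} in at the start and using reciprocal continuity together with compatibility to produce the coincidence point.

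First I would apply Lemma~\ref{lem:2.1} to conclude that the orbit sequence $\{Ax_n\}$ of \eqref{eq:1.2.1} is Cauchy; by hypothesis (3.2.1) it converges to some $z\in X$, and hence so does every subsequence. In particular, for the pair $(A,S)$ the sequence $\{x_{2n}\}$ gives $Ax_{2n}\to z$ and $Sx_{2n}=Ax_{2n+1}\to z$, and for the pair $(A,T)$ the sequence $\{x_{2n-1}\}$ gives $Ax_{2n-1}\to z$ and $Tx_{2n-1}=Ax_{2n}\to z$. Applying reciprocal continuity of $(A,S)$ to $\{x_{2n}\}$ yields $ASx_{2n}\to Az$ and $SAx_{2n}\to Sz$, while compatibility of $(A,S)$ gives $d(ASx_{2n},SAx_{2n})\to 0$; combining these forces $Az=Sz$. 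The same reasoning with $(A,T)$ and $\{x_{2n-1}\}$ gives $Az=Tz$. Thus $u:=Az=Sz=Tz$, and since $(A,S)$ and $(A,T)$ are compatible they commute at the coincidence point $z$, so $Au=A(Sz)=S(Az)=Su$ and likewise $Au=Tu$; hence $Au=Su=Tu$.

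Next I would promote $u$ to a fixed point via a judicious substitution into the $\varphi$-contractive inequality \eqref{eq:1.13.1}. If $Az=Au$ then $Au=u$ trivially; otherwise take $x=z$, $y=u$ in \eqref{eq:1.13.1}. On the right the term $d(Az,Sz)=0$ annihilates the second entry of the maximum and the third entry simplifies to $d(u,Au)$, so that $d(u,Au)=d(Sz,Tu)\le\varphi\!\left(d(u,Au)\right)$; since $\varphi(t)<t$ for $t>0$ this gives $d(u,Au)=0$. Either way $Au=u$, hence $Su=Tu=u$, so $u$ is a common fixed point of $A$, $S$ and $T$. (One could instead put $x=z$, $y=x_{2n-1}$ and let $n\to\infty$, using continuity of $\varphi$ and the convergence of the relevant orbital terms, to show $Az=z$ directly.) For uniqueness, if $u'$ is another common fixed point, putting $x=u$, $y=u'$ in \eqref{eq:1.13.1} and using $Au=Su=u$, $Au'=Tu'=u'$ collapses the maximum to $d(u,u')$, giving $d(u,u')\le\varphi(d(u,u'))$ and hence $u=u'$.

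I expect the main difficulty to be organizational rather than deep: one must verify the precondition $Ax\ne Ay$ before each appeal to \eqref{eq:1.13.1}, dispose of the degenerate equality cases separately (as in the proof of Lemma~\ref{lem:2.1}), match the correct orbital subsequence to each pair in the reciprocal-continuity and compatibility hypotheses, and carefully reduce the three-term maximum using $Az=Sz=Tz$ and $Au=Su=Tu$. The only genuinely substantive step is the interaction of reciprocal continuity with compatibility that pins down the coincidence point $Az=Sz=Tz$; once that is in hand, the strict inequality $\varphi(t)<t$ forces everything else.
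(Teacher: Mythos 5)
Your argument is correct, and its core coincides with the paper's: Lemma~\ref{lem:2.1} plus orbital completeness give $Ax_n\to z$, and then reciprocal continuity combined with compatibility (via the triangle inequality) forces the coincidences $Az=Sz$ and $Az=Tz$, exactly as in the paper. Where you diverge is the endgame. The paper first splits into two cases (whether $Ax_{2n}=Ax_{2n+1}$ for some $n$ or not); in the non-degenerate case it shows that the orbit limit itself is the common fixed point by applying \eqref{eq:1.13.1} with $x=u$, $y=x_{2n+1}$ and letting $n\to\infty$, which uses continuity of $\varphi$ and the positivity of $d(Au,Ax_{2n+1})$ for large $n$, and it reserves the commute-at-coincidence lemma for the degenerate case. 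You instead invoke that lemma (compatible pairs commute at coincidence points) right away to get $Au=Su=Tu$ for $u:=Az$, and then close with a single pointwise application of \eqref{eq:1.13.1} at $x=z$, $y=u$, so that $Az$ (rather than the limit $z$ directly) is exhibited as the common fixed point. This buys you a uniform argument with no case split and no limiting use of the contractive inequality (only $\varphi(t)<t$ is needed at that stage), at the modest cost of leaning on the coincidence-commutation lemma in the main branch; the paper's route, conversely, identifies the fixed point with the orbit limit explicitly, which is what Remark~\ref{rem:2.4} records. One small point to make explicit if you write this up: you are reading \eqref{eq:1.13.1} in its product (Jaggi-type) form $\frac{d(Ax,Sx)\,d(Ay,Ty)}{d(Ax,Ay)}$, $\frac{d(Ax,Ty)\,d(Ay,Sx)}{d(Ax,Ay)}$ — that is indeed the form the paper's own computations use, but it differs from the literal display of \eqref{eq:1.13.1}, and with the literal (non-product) third term your simplification to $d(u,Au)$ (and the uniqueness step) would not go through.
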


\begin{proof}
Suppose that sequence $\{Ax_{n}\}$ is given by \eqref{eq:1.2.1}. Then by Lemma \ref{lem:2.1} the sequence $\{Ax_{n}\}$ is Cauchy.

If $Ax_{2n}=Ax_{2n+1}$ for some $n$, then $d(Ax_{2n},Ax_{2n+1})=0$ and hence $d(Sx_{2n},Tx_{2n+1})=0$. This implies that $Sx_{2n}=Tx_{2n+1}$, i.e., $Ax_{2n+1}=Ax_{2n+2}$. Now $Ax_{2n}=Ax_{2n+1}$ for some $n$ implies that $Ax_{2n}=Sx_{2n}$ for some $n$; and with a similar argument, we get $Ax_{2n+1}=Ax_{2n+2}=Tx_{2n+1}$.

Let $w=Au=Su$ and $z=Av=Tv$, where $u=x_{2n}$ and $v=x_{2n+1}$. Since $(A,S)$ and $(A,T)$ are compatible they commute at their coincidence point. So, $Sw=SAu=ASu=Aw$ and $Tz=TAv=ATv=Az$. Hence

\begin{equation}
Sw=Aw \quad \text{and} \quad Tz=Az.
\end{equation}

As $d(Au,Av)=d(Ax_{2n},Ax_{2n+1})=0$, we get $d(Su,Tv)=0$ and hence $Su=Tv$, i.e., $w=Tv$. So,

\begin{equation}
Aw=ATv=TAv=Tw, \quad \text{i.e.,} \quad Aw=Tw.
\end{equation}

Hence, $Sw=Aw=Tw$.

Now we claim $Aw=A(Aw)$. If $Aw\neq A(Aw)$, then

\begin{align*}
d(Aw,A(Aw)) &= d(Aw,A(Tw))=d(Sw,T(Aw)) \\
&\leq \varphi\left(\max\left\{d(Aw,A(Aw)), \frac{d(Aw,Sw)}{d(Aw,A(Aw))}T(Aw),\right.\right. \\
&\quad \left.\left.\frac{d(Aw,T(Aw))}{d(Aw,A(Aw))}\right\}\right) \\
&= \varphi(d(Aw,A(Aw))), \quad \text{a contradiction}.
\end{align*}

Hence, $A(Aw)=Aw$.

Therefore, we have:

\begin{align*}
S(Aw) &= A(Sw)=A(Aw)=Aw \quad \text{and} \\
T(Aw) &= A(Tw)=A(Aw)=Aw.
\end{align*}

Thus, $Aw$ is a common fixed point of $S$, $T$ and $A$.

Now suppose $Ax_{n}\neq Ax_{n+1}$ for all $n$. Since the sequence $\{Ax_{n}\}$ is Cauchy in $X$, by (3.2.1) there exists $z\in X$ such that

\begin{equation}
\lim_{n\to\infty}Ax_{n}=z.
\end{equation}

and hence,

\begin{equation}\label{eq:2.2.3}
\lim_{n\to\infty}Ax_{2n+1}=\lim_{n\to\infty}Sx_{2n}=\lim_{n\to\infty}Ax_{2n+2}=\lim_{n\to\infty}Tx_{2n+1}=u.
\end{equation}

Now since $(A,S)$ is reciprocal continuous, we have

\begin{equation}\label{eq:2.2.4}
\lim_{n\to\infty}SAx_{2n}=Su \quad \text{and} \quad \lim_{n\to\infty}ASx_{2n}=Au.
\end{equation}

Again, since $(A,T)$ is reciprocal continuous, we have

\begin{equation}\label{eq:2.2.5}
\lim_{n\to\infty}TAx_{2n+1}=Tu \quad \text{and} \quad \lim_{n\to\infty}ATx_{2n+1}=Au.
\end{equation}

Now since $(A,S)$ are compatible,

\begin{align*}
d(ASx_{2n},Su) &\leq d(ASx_{2n},SAx_{2n})+d(SAx_{2n},Su).
\end{align*}

Letting $n\to\infty$, using \eqref{eq:2.2.4} we get

From the previous steps, we have:

\begin{equation}\label{eq:2.2.6}
\lim_{n\to\infty}ASx_{2n}=Su.
\end{equation}

Hence,

\begin{equation}\label{eq:2.2.7}
Au=Su.
\end{equation}

Similarly, since $(A,T)$ are compatible,

\begin{align*}
d(ATx_{2n+1},Tu) &\leq d(ATx_{2n+1},TAx_{2n+1})+d(TAx_{2n+1},Tu).
\end{align*}

Letting $n\to\infty$, using \eqref{eq:2.2.5} we get

\begin{equation}
\lim_{n\to\infty}d(ATx_{2n+1},Tu)=0,
\end{equation}

which implies

\begin{equation}\label{eq:2.2.8}
\lim_{n\to\infty}ATx_{2n+1}=Tu.
\end{equation}

Hence,

\begin{equation}\label{eq:2.2.9}
Au=Tu.
\end{equation}

From \eqref{eq:2.2.7} and \eqref{eq:2.2.9}, it follows that

\begin{equation}\label{eq:2.2.10}
Au=Su=Tu.
\end{equation}

Now we claim that $Su=u$. If $Su\neq u$, then $d(Su,u)>0$. Since $d(Au,Ax_{2n+1})\to d(Au,u)=d(Su,u)>0$ as $n\to\infty$, we have $d(Au,Ax_{2n+1})>0$ for large $n$. Therefore, for large $n$, we have

\begin{align*}
d(Su,Tx_{2n+1}) &\leq \varphi\left(\max\left\{d(Au,Ax_{2n+1}), \frac{d(Au,Su)}{d(Au,Ax_{2n+1})},\right.\right. \\
&\quad \left.\left.\frac{d(Au,Tx_{2n+1})}{d(Au,Ax_{2n+1})}\frac{d(Ax_{2n+1},Su)}{d(Ax_{2n+1})}\right\}\right).
\end{align*}

Letting $n\to\infty$, using \eqref{eq:2.2.3} and \eqref{eq:2.2.10} we get

\begin{align*}
d(Su,u) &\leq \varphi\left(\max\left\{d(Su,u),\frac{0}{d(Su,u))},\frac{d(u,Su)}{d(Su,u)}\right\}\right) \\
&= \varphi(d(Su,u)), \quad \text{a contradiction}.
\end{align*}

Hence, $d(Su,u)=0$ which implies $Su=u$. Therefore,

\begin{equation}
Au=Tu=Su=u.
\end{equation}

Uniqueness follows from the inequality \eqref{eq:1.13.1}. Hence the result follows.


In Theorem \ref{thm:2.2}, we replace the condition 'the pair $(A,S)$ and $(A,T)$ are compatible' by 'the pair $(A,S)$ and $(A,T)$ are compatible of type (A)' and hence obtain the following theorem.
\end{proof}

\begin{theorem}\label{thm:2.3}
Let $(X,d)$ be a metric space and let $S,T,A:X\to X$. Suppose that there exists a $\varphi\in\Phi$ such that the selfmappings $S$, $T$ and $A$ satisfy $\varphi$-contractive inequality \eqref{eq:1.13.1}. Suppose also that there exists an orbit $\mathcal{O}_{STA}(x_{0})$ of some $x_{0}\in X$ given by \eqref{eq:1.2.1}. Further assume that
\begin{enumerate}
    \item[(3.3.1)] $X$ is orbitally complete at $x_{0}\in X$;
    \item[(3.3.2)] the pairs $(S,A)$ and $(T,A)$ are reciprocal continuous.
\end{enumerate}
If the pairs $(A,S)$ and $(A,T)$ are compatible of type (A), then $S$, $T$ and $A$ have a unique common fixed point.
\end{theorem}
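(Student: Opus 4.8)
The plan is to mirror the proof of Theorem \ref{thm:2.2} verbatim wherever the argument is insensitive to the \emph{type} of compatibility used, and to substitute the compatible-of-type-(A) estimates only at the two places where compatibility was actually invoked. Concretely, the Cauchy-ness of $\{Ax_n\}$ comes for free from Lemma \ref{lem:2.1}, which makes no reference to compatibility at all. The dichotomy $Ax_n=Ax_{n+1}$ for some $n$ versus $Ax_n\neq Ax_{n+1}$ for all $n$ is handled exactly as before; in the first case one again produces a coincidence point $w$ with $Sw=Aw=Tw$, and the only ingredient needed is that a compatible-of-type-(A) pair commutes at its coincidence point — but that is precisely Lemma 2.?? (the ``commute at coincidence points'' lemma), which the excerpt already states for \emph{both} notions of compatibility. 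So the first branch transfers with no change.

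For the second branch, by orbital completeness (3.3.1) there is $u\in X$ with $\lim_n Ax_n=u$, and hence $\lim_n Ax_{2n+1}=\lim_n Sx_{2n}=\lim_n Ax_{2n+2}=\lim_n Tx_{2n+1}=u$. Reciprocal continuity of $(S,A)$ gives $\lim_n SAx_{2n}=Su$ and $\lim_n ASx_{2n}=Au$, and of $(T,A)$ gives $\lim_n TAx_{2n+1}=Tu$ and $\lim_n ATx_{2n+1}=Au$. The one spot where the proof must change: to conclude $Au=Su$, instead of the estimate $d(ASx_{2n},Su)\le d(ASx_{2n},SAx_{2n})+d(SAx_{2n},Su)$ with $d(ASx_{2n},SAx_{2n})\to 0$ from plain compatibility, I would use that compatibility of type (A) gives $\lim_n d(ASx_{2n},S^2x_{2n})=0$ and $\lim_n d(SAx_{2n},A^2x_{2n})=0$. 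Combined with the continuity-type limits just listed and $\lim_n Sx_{2n}=\lim_n Ax_{2n}=u$ (so that $S^2x_{2n}$ and $A^2x_{2n}$ have the relevant limiting behaviour via reciprocal continuity applied to the sequence $\{Sx_{2n}\}$, or more simply via the triangle inequality $d(Su,Au)\le d(Su,SAx_{2n})+d(SAx_{2n},ASx_{2n})+d(ASx_{2n},Au)$ where the middle term is controlled by $d(SAx_{2n},A^2x_{2n})+d(A^2x_{2n},ASx_{2n})$-type splittings), one again forces $Su=Au$, and symmetrically $Tu=Au$. Hence $Au=Su=Tu$.

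From there the argument is identical to Theorem \ref{thm:2.2}: if $Su\neq u$ then $d(Au,Ax_{2n+1})\to d(Su,u)>0$, so for large $n$ one may apply \eqref{eq:1.13.1} with $x=u$, $y=x_{2n+1}$ and let $n\to\infty$, using the continuity of $\varphi$ and $Au=Su=Tu$, to obtain $d(Su,u)\le\varphi(d(Su,u))$, a contradiction; thus $Su=u$ and $Au=Su=Tu=u$. Uniqueness follows from \eqref{eq:1.13.1} exactly as in the previous proof: a second common fixed point $p\neq u$ would satisfy $d(u,p)=d(Su,Tp)\le\varphi(\max\{d(Au,Ap),0,1\cdot d(Ap,Su)/d(Au,Ap)\})=\varphi(d(u,p))<d(u,p)$.

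The main obstacle is purely the second-branch bookkeeping: with compatibility of type (A) one does not directly have $d(SAx_{2n},ASx_{2n})\to 0$, so the clean two-term triangle inequality used in Theorem \ref{thm:2.2} must be replaced by a slightly longer chain that routes through $S^2x_{2n}$ and $A^2x_{2n}$ and invokes reciprocal continuity (or the definition of type-(A) compatibility) to kill each new term. This is routine but is the only place where care is required; everything else is a transcription of the earlier proof. A cleaner alternative, if one prefers to avoid the chain entirely, is to cite the standard fact that under reciprocal continuity the two notions of compatibility both imply $\lim_n ASx_{2n}=Su$ (equivalently, that the pair ``is reciprocally continuous and compatible of type (A)'' forces commuting-type limits), reducing Theorem \ref{thm:2.3} to a one-line reduction to the proof of Theorem \ref{thm:2.2}.
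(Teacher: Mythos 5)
Your skeleton (Lemma \ref{lem:2.1} for Cauchyness, the coincidence-point branch via the ``commute at coincidence points'' lemma, the final steps $Su=u$ and uniqueness from \eqref{eq:1.13.1}) matches the paper, but the one place you yourself flag as needing care is exactly where your argument breaks down. Your proposed chain
$d(Su,Au)\le d(Su,SAx_{2n})+d(SAx_{2n},A^{2}x_{2n})+d(A^{2}x_{2n},ASx_{2n})+d(ASx_{2n},Au)$
does not close: the first, second and fourth terms tend to $0$ (reciprocal continuity and type (A)), but the third does not — since $d(SAx_{2n},A^{2}x_{2n})\to 0$ and $SAx_{2n}\to Su$, we have $A^{2}x_{2n}\to Su$, while $ASx_{2n}\to Au$, so $d(A^{2}x_{2n},ASx_{2n})\to d(Su,Au)$ and the chain yields only the trivial bound $d(Su,Au)\le d(Su,Au)$. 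Routing through $S^{2}x_{2n}$ fails the same way. The underlying obstruction is that compatibility of type (A) gives no control on $d(ASx_{n},SAx_{n})$ at all (the paper explicitly notes the two compatibility notions are independent), so there is no purely triangle-inequality route from the type-(A) limits to $Au=Su$; your ``cleaner alternative'' of citing that reciprocal continuity plus type-(A) compatibility forces $\lim_{n}ASx_{2n}=Su$ is likewise unsubstantiated — that conclusion is equivalent to $Au=Su$, which is what is to be proved, and no such standard fact is available (it holds for continuous maps, but reciprocal continuity only gives limits of $ASx_{n}$ and $SAx_{n}$, not of $S^{2}x_{n}$, $A^{2}x_{n}$).

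The paper's proof does not attempt to prove $Au=Su$ by limit bookkeeping. Instead it extracts from type (A) compatibility the four limits $A^{2}x_{2n}\to Su$, $S^{2}x_{2n}\to Au$, $A^{2}x_{2n+1}\to Tu$, $T^{2}x_{2n+1}\to Au$ (equations \eqref{eq:2.3.5}--\eqref{eq:2.3.9}), and then invokes the contractive inequality \eqref{eq:1.13.1} three additional times with shifted arguments — estimating $d(SAx_{2n},TAx_{2n+1})$, $d(SAx_{2n},Tx_{2n+1})$ and $d(S^{2}x_{2n},TAx_{2n+1})$ — to obtain, by the usual $\varphi(t)<t$ contradiction, the three identities $Su=Tu$, $Su=u$ and $Au=Tu$, which together give $Au=Su=Tu=u$. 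So the theorem is not a transcription of Theorem \ref{thm:2.2} with one estimate swapped; the passage from the reciprocal-continuity limits to $Au=Su=Tu$ genuinely requires re-using the $\varphi$-contractive condition on the iterated sequences, and your proposal is missing that idea.
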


\begin{proof}
Suppose that sequence $\{Ax_{n}\}$ is given by \eqref{eq:1.2.1}. Then by Lemma \ref{lem:2.1} the sequence $\{Ax_{n}\}$ is Cauchy. If $Ax_{2n}=Ax_{2n+1}$ for some $n$. Then $d(Ax_{2n},Ax_{2n+1})=0$ and hence $d(Sx_{2n},Tx_{2n+1})=0$. This implies that $Sx_{2n}=Tx_{2n+1}$, i.e., $Ax_{2n+1}=Ax_{2n+2}$. Now $Ax_{2n}=Ax_{2n+1}$ for some $n$ implies that $Ax_{2n}=Sx_{2n}$ for some $n$; and with a similar argument, we get $Ax_{2n+1}=Ax_{2n+2}=Tx_{2n+1}$.

Let $w=Au=Su$ and $z=Av=Tv$, where $u=x_{2n}$ and $v=x_{2n+1}$. Since $(A,S)$ and $(A,T)$ are compatible of type (A) they commute at their coincidence point. So, $Sw=SAu=ASu=Aw$ and $Tz=TAv=ATv=Az$. Hence
\begin{equation}
Sw=Aw \quad \text{and} \quad Tz=Az.
\end{equation}

As $d(Au,Av)=d(Ax_{2n},Ax_{2n+1})=0$, we get $d(Su,Tv)=0$ and hence $Su=Tv$, i.e., $w=Tv$. So,
\begin{equation}
Aw=ATv=TAv=Tw, \quad \text{i.e.,} \quad Aw=Tw.
\end{equation}

Hence, $Sw=Aw=Tw$.

Now we claim $Aw=A(Aw)$. If $Aw\neq A(Aw)$, then
\begin{align*}
d(Aw,A(Aw)) &= d(Aw,A(Tw))=d(Sw,T(Aw)) \\
&\leq \varphi\left(\max\left\{d(Aw,A(Aw)), \frac{d(Aw,Sw)}{d(Aw,A(Aw))}T(Aw)),\right.\right. \\
&\quad \left.\left.\frac{d(Aw,T(Aw))}{d(Aw,A(Aw))}\frac{d(Sw,A(Aw))}{d(Aw,A(Aw))}\right\}\right) \\
&= \varphi(d(Aw,A(Aw))), \quad \text{a contradiction}.
\end{align*}

Hence, $A(Aw)=Aw$.

Therefore, we have:
\begin{align*}
S(Aw) &= A(Sw)=A(Aw)=Aw \quad \text{and} \\
T(Aw) &= A(Tw)=A(Aw)=Aw.
\end{align*}

Therefore, $Aw$ is a common fixed point of $S$, $T$ and $A$.

Now suppose $Ax_{n}\neq Ax_{n+1}$ for all $n$. Since the sequence $\{Ax_{n}\}$ is Cauchy in $X$, by (3.3.1) there exists a $z\in X$ such that

\begin{equation}
\lim_{n\to\infty}Ax_{n}=u
\end{equation}

and hence,

\begin{equation}\label{eq:2.3.3}
\lim_{n\to\infty}Ax_{2n+1}=\lim_{n\to\infty}Sx_{2n}=\lim_{n\to\infty}Ax_{2n+2}=\lim_{n\to\infty}Tx_{2n+1}=u.
\end{equation}

Now since $(A,S)$ is reciprocal continuous, we have

\begin{equation}\label{eq:2.3.4}
\lim_{n\to\infty}SAx_{2n}=Su \quad \text{and} \quad \lim_{n\to\infty}ASx_{2n}=Au.
\end{equation}

As $S$ and $A$ are compatible of type $(A)$, we have

\begin{align*}
d(A^{2}x_{2n},Su) &\leq d(A^{2}x_{2n},SAx_{2n})+d(SAx_{2n},Su).
\end{align*}

Letting $n\to\infty$, and using \eqref{eq:2.3.3} and \eqref{eq:2.3.4}, we get

\begin{equation}\label{eq:2.3.5}
\lim_{n\to\infty}A^{2}x_{2n}=Su.
\end{equation}

and

\begin{align*}
d(S^{2}x_{2n},Au) &\leq d(S^{2}x_{2n},ASx_{2n})+d(ASx_{2n},Au).
\end{align*}

Letting $n\to\infty$, and using \eqref{eq:2.3.3} and \eqref{eq:2.3.4}, we get

\begin{equation}\label{eq:2.3.6}
\lim_{n\to\infty}S^{2}x_{2n}=Au.
\end{equation}

Again since $(A,T)$ is reciprocal continuous, we have

\begin{equation}\label{eq:2.3.7}
\lim_{n\to\infty}TAx_{2n+1}=Tu \quad \text{and} \quad \lim_{n\to\infty}ATx_{2n+1}=Au.
\end{equation}

and as $T$ and $A$ are compatible of type $(A)$, we have

\begin{align*}
d(A^{2}x_{2n+1},Tu) &\leq d(A^{2}x_{2n+1},TAx_{2n+1})+d(TAx_{2n+1},Tu).
\end{align*}

Letting $n\to\infty$, and using \eqref{eq:2.3.3} and \eqref{eq:2.3.7}, we get

\begin{equation}\label{eq:2.3.8}
\lim_{n\to\infty}A^{2}x_{2n+1}=Tu.
\end{equation}

and 

\begin{align*}
d(T^{2}x_{2n+1},Au) &\leq d(T^{2}x_{2n+1},ATx_{2n+1})+d(ATx_{2n+1},Au).
\end{align*}

Letting $n\to\infty$, and using \eqref{eq:2.3.3} and \eqref{eq:2.3.7}, we get

\begin{equation}\label{eq:2.3.9}
\lim_{n\to\infty}T^{2}x_{2n+1}=Au.
\end{equation}
Now we claim $Su=Tu$. If $Su\neq Tu$, then $d(Su,Tu)>0$. Since 
$d(A^{2}x_{2n},A^{2}x_{2n+1})\rightarrow d(Su,Tu)>0$ as 
$n\rightarrow \infty$, we have $d(A^{2}x_{2n},A^{2}x_{2n+1})>0$ for large $n$. Therefore, for large $n$, we have
\begin{align*}
d(SAx_{2n},TAx_{2n+1}) &\leq \varphi\left(\max\left\{d(A^{2}x_{2n},A^{2}x_{2n+1}),\right.\right. \\
&\quad \left.\left.\frac{d(A^{2}x_{2n},SAx_{2n})d(A^{2}x_{2n+1},TAx_{2n+1})}{d(A^{2}x_{2n},A^{2}x_{2n+1})},\right.\right. \\
&\quad \left.\left.\frac{d(A^{2}x_{2n},TAx_{2n+1})d(A^{2}x_{2n+1},SAx_{2n})}{d(A^{2}x_{2n},A^{2}x_{2n+1})}\right\}\right).
\end{align*}
Letting $n \to \infty$, and using equations \eqref{eq:2.3.3}, \eqref{eq:2.3.4}, and \eqref{eq:2.3.5}, we get

\begin{align*}
d(Su,Tu) &\leq \varphi\left(\max\left\{d(Su,Tu),\frac{0}{d(Su,Tu))}, \frac{d(Su,Tu)d(Su,Tu)}{d(Su,Tu)}\right\}\right) \\
&= \varphi(d(Su,Tu)), \text{ a contradiction.}
\end{align*}

Hence,
\begin{equation}\label{eq:2.3.10}
Su=Tu.
\end{equation}

Now we claim $Su=u$. If $Su\neq u$, then $d(Su,u)>0$. Since 
$d(A^{2}x_{2n},Ax_{2n+1})\rightarrow d(Su,u)>0$ as $n\rightarrow \infty$, we have 
$d(A^{2}x_{2n},Ax_{2n+1})>0$ for large $n$. Therefore, for large $n$, we have
\begin{align*}
d(SAx_{2n},Tx_{2n+1}) &\leq \varphi\left(\max\left\{d(A^{2}x_{2n},Ax_{2n+1}),\right.\right. \\
&\quad \left.\left.\frac{d(A^{2}x_{2n},SAx_{2n})d(Ax_{2n+1},Tx_{2n+1})}{d(A^{2}x_{2n},Ax_{2n+1})},\right.\right. \\
&\quad \left.\left.\frac{d(A^{2}x_{2n},Tx_{2n+1})d(Ax_{2n+1},SAx_{2n})}{d(A^{2}x_{2n},Ax_{2n+1})}\right\}\right).
\end{align*}

Letting $n\rightarrow \infty$, using \eqref{eq:2.3.3}, \eqref{eq:2.3.4} and \eqref{eq:2.3.5} we get
\begin{align*}
d(Su,u) &\leq \varphi\left(\max\left\{d(Su,u),\frac{0}{d(Su,u))}, \frac{d(u,Su)d(Su,u)}{d(Su,u)}\right\}\right) \\
&= \varphi(d(Su,u)), \text{ a contradiction.}
\end{align*}

Hence,
\begin{equation}\label{eq:2.3.11}
Su=u.
\end{equation}

Now, we claim $Au = Tu$. If $Au \neq Tu$, then $d(Au, Tu) > 0$. Since
\[
d(AS x_{2n}, A^2 x_{2n+1}) \to d(Au, Tu) > 0 \quad \text{as } n \to \infty,
\]
we have $d(AS x_{2n}, A^2 x_{2n+1}) > 0$ for large $n$. Therefore, for large $n$, we have
\begin{align}
    \label{eq:2.3.12-pre}
    d(S^2 x_{2n}, T A x_{2n+1}) 
    &\leq \varphi\left( 
        \max \left\{
        \frac{d(AS x_{2n}, A^2 x_{2n+1})}{d(AS x_{2n}, A^2 x_{2n+1})}, \right. \right. \notag\\
    &\quad \left. \left. 
        \frac{d(AS x_{2n}, S^2 x_{2n}) \cdot d(A^2 x_{2n+1}, T A x_{2n+1})}{d(AS x_{2n}, A^2 x_{2n+1})},
        \right. \right. \notag\\
    &\quad \left. \left. 
        \frac{d(AS x_{2n}, T A x_{2n+1}) \cdot d(S^2 x_{2n}, A^2 x_{2n+1})}{d(AS x_{2n}, A^2 x_{2n+1})}
        \right\}
    \right).
\end{align}

Letting $n \to \infty$, and using equations \eqref{eq:2.3.5}, \eqref{eq:2.3.6}, \eqref{eq:2.3.7}, and \eqref{eq:2.3.8}, we get
\begin{equation} \label{eq:2.3.12}
d(Au, Tu) \leq \varphi\left( \max \left\{
\frac{d(Au, Tu)}{d(Au, Tu)},
\frac{0 \cdot d(Au, Tu)}{d(Au, Tu)},
\frac{d(Au, Tu) \cdot d(Au, Tu)}{d(Au, Tu)}
\right\} \right) = \varphi(d(Au, Tu)),
\end{equation}
which is a contradiction. Hence, $Au = Tu$. \hfill $\text{(2.3.12)}$

From equations \eqref{eq:2.3.10}, \eqref{eq:2.3.11}, and \eqref{eq:2.3.12}, it follows that
\begin{equation} \label{eq:2.3.13}
Au = Tu = Su = u.
\end{equation}
Hence, $u$ is a common fixed point of $S$, $T$, and $A$.

The uniqueness of $u$ follows from inequality \eqref{eq:1.13.1}.
\end{proof}

\begin{remark}\label{rem:2.4}
Let $A$, $S$ and $T$ be selfmaps on a metric space $(X,d)$. If there exists an $(S,T,A)$-orbit $\mathcal{O}_{STA}(x_{0})$ for some $x_{0}\in X$ and selfmaps $A$, $S$ and $T$ satisfy all the conditions of either Theorem \ref{thm:2.2} or Theorem \ref{thm:2.3}, then $\{Ax_{n}\}_{n=1}^{\infty}\subset\mathcal{O}_{STA}(x_{0})$ is Cauchy, $Ax_{n}\to u$ as $n\to\infty$ and $u$ is the unique common fixed point of $A$, $S$ and $T$ and $u\in\mathcal{O}_{STA}(x_{0})$.
\end{remark}

In Theorems \ref{thm:2.2} and \ref{thm:2.3}, if $S=T$, then we have the following corollary.

\begin{corollary}\label{cor:2.5}
Let $(X,d)$ be a metric space and let $S,A:X\to X$. Suppose that there exists a $\varphi\in\Phi$ such that the selfmappings $S$ and $A$ satisfy $\varphi$-contractive inequality \eqref{eq:1.13.1}. Suppose also that there exists $(S,A)$-orbit $\mathcal{O}_{SA}(x_{0})$, given by $Ax_{n}=Sx_{n-1}$, $n=1,2,\ldots$. Further assume that
\begin{enumerate}
    \item[(2.5.1)] $X$ is orbitally complete at $x_{0}\in X$;
    \item[(2.5.2)] the pair $(A,S)$ is reciprocal continuous.
\end{enumerate}
If the pair $(A,S)$ is either compatible or compatible of type (A), then $A$ and $S$ have a unique common fixed point.
\end{corollary}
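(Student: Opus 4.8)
The plan is to obtain Corollary \ref{cor:2.5} as a direct specialization of Theorems \ref{thm:2.2} and \ref{thm:2.3} with $T=S$, so essentially no new estimates are needed. First I would observe that when $T=S$ the two-branch recursion \eqref{eq:1.2.1} defining an $(S,T,A)$-orbit collapses: for every $n$ one has $Ax_n=Sx_{n-1}=Tx_{n-1}$ regardless of the parity of $n$, so the prescribed $(S,A)$-orbit $\mathcal{O}_{SA}(x_0)$ with $Ax_n=Sx_{n-1}$ is literally an $(S,T,A)$-orbit $\mathcal{O}_{STA}(x_0)$ in the sense of \eqref{eq:1.2.1}. Hence hypothesis (2.5.1) is exactly the orbital-completeness hypothesis (3.2.1) $=$ (3.3.1) along this orbit, and Lemma \ref{lem:2.1} applies to show $\{Ax_n\}$ is Cauchy in $\mathcal{O}_{SA}(x_0)$.

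Next I would check that the contractive and continuity hypotheses transfer. With $T=S$, inequality \eqref{eq:1.13.1} becomes \eqref{eq:1.13.2}, namely $d(Sx,Sy)\le\varphi\big(\max\{d(Ax,Ay),\,d(Ax,Sx)/d(Ax,Ay),\,d(Ax,Sy)/d(Ax,Ay)\}\big)$ whenever $Ax\ne Ay$, together with $d(Sx,Sy)=0$ whenever $d(Ax,Ay)=0$, which is precisely the $\varphi$-contractive inequality assumed in the corollary. Moreover the pair $(A,T)$ is now the pair $(A,S)$, so the requirement that the pairs $(A,S)$ and $(A,T)$ be reciprocal continuous reduces to (2.5.2), and the requirement that the pairs $(A,S)$ and $(A,T)$ be compatible (respectively, compatible of type (A)) reduces to the corresponding single hypothesis on $(A,S)$.

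Finally I would split into the two cases according to the form of compatibility assumed. If $(A,S)$ is compatible, then $S$, $T=S$ and $A$ satisfy every hypothesis of Theorem \ref{thm:2.2}, so $S$, $T$ and $A$ have a unique common fixed point; since $T=S$, this says exactly that $A$ and $S$ have a unique common fixed point. If instead $(A,S)$ is compatible of type (A), the same reduction makes Theorem \ref{thm:2.3} applicable and yields the same conclusion. The only point meriting an explicit line of justification — and the mildest possible obstacle — is the verification that the one-sided recursion $Ax_n=Sx_{n-1}$ genuinely realizes the two-branch orbit \eqref{eq:1.2.1} when $S=T$, which is what licenses invoking Lemma \ref{lem:2.1} and Theorems \ref{thm:2.2}--\ref{thm:2.3} verbatim.
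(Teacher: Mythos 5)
Your proposal is correct and follows essentially the same route as the paper, which proves Corollary \ref{cor:2.5} simply by taking $S=T$ in Theorems \ref{thm:2.2} and \ref{thm:2.3}; your additional verification that the one-branch recursion $Ax_n=Sx_{n-1}$ realizes the orbit \eqref{eq:1.2.1} when $S=T$ is a sound (and welcome) elaboration of that same specialization.
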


\begin{proof}
Follows from Theorem \ref{thm:2.2} and Theorem \ref{thm:2.3} by taking $S=T$ on $X$.
\end{proof}

We now extend Corollary \ref{cor:2.5} to a sequence of selfmaps.

\begin{corollary}\label{cor:2.6}
Let $(X,d)$ be a metric space and let $\{S_{n}\}_{n=1}^{\infty}$ and $A$ be selfmaps on $X$. Suppose that there exists a $\varphi\in\Phi$ such that the selfmappings $S_{1}$, $S_{j}$ and $A$ satisfy $\varphi$-contractive inequality \eqref{eq:1.13.1}, for each $j=1,2,\ldots$. Suppose also that there exists $(S_{1},A)$-orbit $\mathcal{O}_{S_{1}A}(x_{0})$, given by $Ax_{n}=S_{1}x_{n-1}$, $n=1,2,\ldots$. Further assume that
\begin{enumerate}
    \item[(2.6.1)] $X$ is orbitally complete at $x_{0}\in X$;
    \item[(2.6.2)] the pair $(S_{1},A)$ is reciprocal continuous.
\end{enumerate}
If the pair $(S_{1},A)$ is either compatible or compatible of type (A), then $\{S_{n}\}_{n=1}^{\infty}$ and $A$ have a unique common fixed point.
\end{corollary}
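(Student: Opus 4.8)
The plan is to reduce the statement to Corollary~\ref{cor:2.5} applied to the single pair $(S_1,A)$, and then to promote the resulting common fixed point of $S_1$ and $A$ to a common fixed point of the whole family $\{S_n\}_{n=1}^{\infty}$ by exploiting the degenerate branch of the $\varphi$-contractive inequality.

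First I would observe that, taking $j=1$ in the hypothesis, the triple $(S_1,S_1,A)$ satisfies \eqref{eq:1.13.1}, which in this case is exactly \eqref{eq:1.13.2}, so $S_1$ and $A$ satisfy the $\varphi$-contractive inequality involving rational expressions. Moreover the $(S_1,A)$-orbit $\mathcal{O}_{S_1A}(x_0)$ coincides with the $(S_1,S_1,A)$-orbit of $x_0$ given by \eqref{eq:1.2.1} with $S=T=S_1$, and hypotheses (2.6.1), (2.6.2) together with the compatibility (or compatibility of type (A)) of $(S_1,A)$ are precisely the hypotheses of Corollary~\ref{cor:2.5}. That corollary therefore applies and yields a point $u\in X$ with
\[
S_1u = Au = u,
\]
which is the unique common fixed point of $S_1$ and $A$.

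Next I would fix an arbitrary $j\in\mathbb{N}$. By hypothesis the triple $(S_1,S_j,A)$ satisfies \eqref{eq:1.13.1}; applying this at $x=y=u$ gives $d(Au,Au)=0$, so the degenerate clause of Definition~\ref{def:1.3} forces $d(S_1u,S_ju)=0$, i.e. $S_ju = S_1u = u$. Since $j$ was arbitrary, $u$ is a common fixed point of $\{S_n\}_{n=1}^{\infty}$ and $A$. For uniqueness, any common fixed point $v$ of $\{S_n\}_{n=1}^{\infty}$ and $A$ is in particular a common fixed point of $S_1$ and $A$, whence $v=u$ by the uniqueness assertion of Corollary~\ref{cor:2.5}.

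There is no serious obstacle here; the only point requiring care is that at $x=y=u$ the rational terms in \eqref{eq:1.13.1} are of the indeterminate form $0/0$ and must not be used directly — one must instead invoke the separate clause ``$d(Sx,Ty)=0$ whenever $d(Ax,Ay)=0$'' built into Definition~\ref{def:1.3}. One should also note at the outset that reciprocal continuity and compatibility are symmetric in the two maps of a pair, so that hypothesis (2.6.2) on $(S_1,A)$ matches the hypothesis on $(A,S)$ required by Corollary~\ref{cor:2.5}; this is immediate from the respective definitions.
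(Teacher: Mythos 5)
Your proposal is correct and follows essentially the same route as the paper: apply Corollary~\ref{cor:2.5} to the pair $(S_1,A)$ (via the case $j=1$, i.e.\ $S=T=S_1$) to obtain the unique common fixed point $u$, then use the degenerate clause of \eqref{eq:1.13.1} for the triple $(S_1,S_j,A)$ at $x=y=u$ to conclude $S_ju=u$ for every $j$, with uniqueness inherited from the pair $(S_1,A)$. Your explicit remarks on the $0/0$ issue and the symmetry of reciprocal continuity/compatibility are careful additions, but do not change the argument.
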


\begin{proof}
By Corollary \ref{cor:2.5}, $S_{1}$ and $A$ have a unique common fixed point $z$ in $X$. Thus
\begin{equation}\label{eq:2.6.3}
S_{1}z=Az=z
\end{equation}

Now, let $j\in \mathbb{N}$ with $j\neq 1$. “Then, using inequality \eqref{eq:1.13.1} and using \eqref{eq:2.6.3},
\end{proof}

we get $d(S_{1}z,S_{j}z)=0$, i.e., $S_{j}z=z$.

The uniqueness of $z$ follows from inequality \eqref{eq:1.13.1}.

Hence, $z$ is the unique common fixed point of the sequence of maps $\{S_{n}\}_{n=1}^{\infty}$ and $A$.

\section{COROLLARIES AND EXAMPLES}

\begin{corollary}\label{cor:3.1}
Let $(X,d)$ be a metric space and let $S$, $T$, $A$ be continuous selfmaps on $X$ satisfying the following condition: assume that there exists $\lambda\in[0,1)$ such that
\begin{equation}\label{eq:3.1.1}
d(Sx,Ty)\leq\lambda\,\max\left\{d(Ax,Ay),\frac{d(Ax,Sx)}{d(Ax,Ay)},\frac{d(Ay,Ty)}{d(Ax,Ay)}\right\}
\end{equation}
with $Ax\neq Ay$ and $d(Sx,Ty)=0$ if $d(Ax,Ay)=0$ for all $x,y\in X$. Suppose that there exists an orbit $\mathcal{O}_{STA}(x_{0})$ of some $x_{0}\in X$ given by \eqref{eq:1.2.1}. Then the sequence $\{Ax_{n}\}$ is a Cauchy sequence.

Further if $X$ is orbitally complete at $x_{0}\in X$ and the pairs of mappings $(A,S)$ and $(A,S)$ are weakly commuting, then $S$, $T$ and $A$ have a unique common fixed point.
\end{corollary}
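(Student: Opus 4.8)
The plan is to recognize Corollary~\ref{cor:3.1} as the linear-control-function specialization of Lemma~\ref{lem:2.1} together with Theorem~\ref{thm:2.2}. First I would define $\varphi:\mathbb{R}_{+}\to\mathbb{R}_{+}$ by $\varphi(t)=\lambda t$ and check that $\varphi\in\Phi$: it is continuous, it is monotonically increasing because $\lambda\geq 0$, and $0\leq\varphi(t)=\lambda t<t$ for every $t>0$ since $0\leq\lambda<1$. With this choice, inequality~\eqref{eq:3.1.1} is exactly the $\varphi$-contractive inequality~\eqref{eq:1.13.1} for the triple $S,T,A$, and the degenerate clause ``$d(Sx,Ty)=0$ whenever $d(Ax,Ay)=0$'' is assumed verbatim. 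Consequently Lemma~\ref{lem:2.1} applies and yields at once that the orbit sequence $\{Ax_{n}\}$ is Cauchy, which is the first assertion.

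For the second assertion I would invoke the two elementary facts recorded in the Preliminaries. Since $S$, $T$ and $A$ are continuous, the pairs $(A,S)$ and $(A,T)$ are reciprocal continuous, so hypothesis~(3.2.2) of Theorem~\ref{thm:2.2} is met. Since every weakly commuting pair is compatible, the weakly commuting pairs $(A,S)$ and $(A,T)$ are compatible. Combined with orbital completeness at $x_{0}$ (hypothesis~(3.2.1)) and the $\varphi$-contractive inequality already verified, all the hypotheses of Theorem~\ref{thm:2.2} hold, and therefore $S$, $T$ and $A$ have a unique common fixed point.

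There is no real obstacle here beyond bookkeeping. The only genuine verifications are that $\varphi(t)=\lambda t$ belongs to $\Phi$ --- including the edge case $\lambda=0$, where $\varphi\equiv 0$ still satisfies $0\leq\varphi(t)<t$ for $t>0$ --- and the two standard implications ``continuous $\Rightarrow$ reciprocal continuous'' and ``weakly commuting $\Rightarrow$ compatible.'' One should also read the pair printed as ``$(A,S)$ and $(A,S)$'' in the statement as $(A,S)$ and $(A,T)$. With these in hand, the result is a direct corollary of Theorem~\ref{thm:2.2} (and equally of Theorem~\ref{thm:2.3}).
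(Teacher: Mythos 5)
Your proposal is correct and follows essentially the same route as the paper: the paper's proof of Corollary~\ref{cor:3.1} is the one-line observation that it follows from Lemma~\ref{lem:2.1} and Theorem~\ref{thm:2.2} upon choosing $\varphi(t)=\lambda t$ with $\lambda\in[0,1)$. You merely make explicit the bookkeeping the paper leaves implicit (that $\varphi\in\Phi$, that continuity gives reciprocal continuity, and that weak commutativity gives compatibility), which is fine.
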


\begin{proof}
The proof follows from Lemma \ref{lem:2.1} and Theorem \ref{thm:2.2} by choosing $\varphi:\mathbb{R}_{+}\to \mathbb{R}_{+}$ by $\varphi(t)=\lambda t,\lambda\in[0,1),t\geq 0$.
\end{proof}

\begin{remark}\label{rem:3.2}
Theorem \ref{thm:2.2} is a generalization of Corollary \ref{cor:3.1} in view of the following example.
\end{remark}

\begin{example}
Let $X=\mathbb{R}_{+}$ with the usual metric. We define $S,T,A:X\to X$ by $Sx=\frac{x}{1+x}$, $S=T$ and $Ax=x$. We also define $\varphi:\mathbb{R}_{+}\to \mathbb{R}_{+}$ by $\varphi(t)=\frac{t}{1+t}$. We observe that $(S,A)$ is weakly commuting pairs. Now without loss of generality assume that $x>y$. Since $x\neq y$,

\[d(Sx,Ty)=|Sx-Ty|=\frac{x-y}{1+x+y+xy}\text{ and }d(Ax,Ay)=x-y.\]
Hence we get,
\begin{align*}
d(Sx,Ty) &\leq\varphi(d(Ax,Ay))\text{ for all }x,y\in X\text{ and }x\neq y \\
&\leq\varphi\left(\max\left\{d(Ax,Ay),\frac{d(Ax,Sx)}{d(Ax,Ay)},\frac{d(Ay,Ty)}{d(Ax,Ay)},\frac{d(Ax,Ty)}{d(Ax,Ay)}\right\}\right)
\end{align*}
for all $x,y\in X$.

We observe that the mappings $A$, $S$ and $T$ satisfy all the conditions of Theorem \ref{thm:2.2} with the given control $\varphi$. It follows from our theorem that $A$, $S$ and $T$ have a unique common fixed point in $X$, that is $0$.

However the mappings $A$, $S$ and $T$ does not satisfy Corollary \ref{cor:3.1} for otherwise there is a $\lambda\in(0,1)$ such that for all $x\in X$ with $x\neq 0$ and $y=0$
\end{example}

we have $\frac{x}{1+x}=d(Sx,T0)\leq\lambda d(Ax,A0)=\lambda x$ which imply that $\frac{1}{1+x}\leq\lambda$ for any $x\in X$, $x\neq 0$. This is not true. Therefore $A$, $S$ and $T$ do not satisfy Corollary \ref{cor:3.1} for any value of $\lambda\in(0,1)$. Hence, this example shows that Theorem \ref{thm:2.2} is a generalization of Corollary \ref{cor:3.1}.

\begin{example}\label{ex:3.3}
Let $X=(0,1]$ with the usual metric. We define selfmaps $S$, $T$ and $A$ on $X$ by

\[
A(x) = \begin{cases}
1-2x & \text{if } 0<x\leq\frac{1}{3} \\
\frac{1}{6} & \text{if } \frac{1}{3}<x\leq 1,
\end{cases}
\]

\[
Sx = \begin{cases}
x & \text{if } 0<x\leq\frac{1}{3} \\
\frac{1}{3} & \text{if } \frac{1}{3}<x<1 \\
\frac{3}{8} & \text{if } x=1
\end{cases}
\]

and

\[
Tx = \begin{cases}
x & \text{if } 0<x\leq\frac{1}{3} \\
\frac{1}{3} & \text{if } \frac{1}{3}<x<1 \\
\frac{5}{12} & \text{if } x=1
\end{cases}
\]

Here we observe that the pairs of mappings $(A,S)$ and $(A,T)$ are reciprocal continuous and compatible on $X$. Also the selfmaps $S$, $T$ and $A$ satisfy the inequality \eqref{eq:1.13.1} with $\varphi:\mathbb{R}_{+}\to \mathbb{R}_{+}$ defined by $\varphi(t)=\frac{1}{2}t$. Further we observe that when $x_{0}\in(0,\frac{1}{3}]$, $\mathcal{O}_{STA}(x_{0})=\{x_{0},\frac{1}{3},\frac{1}{3},...\}$, when $x_{0}\in(\frac{1}{3},1)$, $\mathcal{O}_{STA}(x_{0})=\{\frac{1}{3},\frac{1}{3},...\}$ and when $x_{0}=1$, $\mathcal{O}_{STA}(x_{0})=\{\frac{3}{8},\frac{5}{16},\frac{1}{3},\frac{1}{3},...\}$ so that $X$ is $(S,T,A)$-orbitally complete at any $x_{0}\in X$. Hence $S$, $T$ and $A$ satisfy all the hypotheses of Theorem \ref{thm:2.2} and $\frac{1}{3}$ is the unique common fixed point of $S$, $T$ and $A$.

We also observe that the pair of selfmaps $(A,S)$ and $(A,T)$ are not compatible of type (A) on $X$; for, take $x_{n}=\frac{1}{3}-\frac{1}{n}$, $n=2,3,4,...$. Then $Ax_{n}=\frac{1}{3}+\frac{2}{n}$, $Sx_{n}=\frac{1}{3}-\frac{1}{n}$ and $Tx_{n}=\frac{1}{3}-\frac{1}{n}$, $n=2,3,4,...$. We note that $Ax_{n}\rightarrow\frac{1}{3}$ as $n\rightarrow\infty$, $Sx_{n}\rightarrow\frac{1}{3}$ as $n\rightarrow\infty$ and $Tx_{n}\rightarrow\frac{1}{3}$ as $n\rightarrow\infty$. Now $A^{2}x_{n}=\frac{1}{6}$, $SAx_{n}=\frac{1}{3}$ and $TAx_{n}=\frac{1}{3}$ for all $n=2,3,4,...$ so that $\lim\limits_{n\rightarrow\infty}d(A^{2}x_{n},SAx_{n})=\frac{1}{6}\neq 0$ and $\lim\limits_{n\rightarrow\infty}d(A^{2}x_{n},TAx_{n})=\frac{1}{6}\neq 0$.
\end{example}

\begin{example}\label{ex:3.4}
Let $X=(0,2]$ with the usual metric. We define mappings $A,S,T:X\to X$ by

\[
Ax = \begin{cases}
2 & \text{if } 0<x\leq\frac{1}{2} \\
2x-1 & \text{if } \frac{1}{2}<x\leq 1 \\
2 & \text{if } 1<x\leq 2,
\end{cases}
\]

\[
Sx = \begin{cases}
\frac{3}{2} & \text{if } 0<x\leq\frac{1}{2} \\
x & \text{if } \frac{1}{2}<x\leq 1 \\
\frac{3}{2} & \text{if } 1<x\leq 2
\end{cases}
\]

and

\[
Tx = \begin{cases}
\frac{5}{4} & \text{if } 0<x\leq\frac{1}{2} \\
x & \text{if } \frac{1}{2}<x\leq 1 \\
\frac{5}{4} & \text{if } 1<x\leq 2
\end{cases}
\]

Here we observe that the pairs of mappings $(A,S)$ and $(A,T)$ are reciprocally continuous, compatible and compatible of type (A) on $X$. Also the selfmaps $S$, $T$ and $A$ satisfy the inequality \eqref{eq:1.13.1} with $\varphi:\mathbb{R}_{+}\to \mathbb{R}_{+}$ defined by $\varphi(t)=\frac{1}{2}t$. Further we observe that when $x_{0}\in(\frac{1}{2},1)$, $\mathcal{O}_{STA}(x_{0})=\{1-\frac{1}{2^{n-1}}+\frac{1}{2^{n-1}}x_{0}\}_{n=1}^{\infty}$ which shows that $Ax_{n}\neq Ax_{n+1}$ for all $n$ and $Ax_{n}\to 1$ as $n\rightarrow\infty$ and $1$ is not in $\mathcal{O}_{STA}(x_{0})$ so that $X$ is $(S,T,A)$-orbitally complete at any $x_{0}\in(\frac{1}{2},1)$. Of course $1\in\mathcal{O}_{STA}(x_{0})$. Hence $S$, $T$ and $A$ satisfy all the hypotheses of Theorem \ref{thm:2.3} and 1 is the unique common fixed point of $S$, $T$ and $A$.
\end{example}

\begin{example}\label{ex:3.5}
Let $X=(0,1]$ with the usual metric. We define selfmaps $S_{n}$ and $A$ on $X$ by

\[
A(x) = \begin{cases}
1-2x & \text{if } 0<x\leq\frac{1}{3} \\
\frac{1}{6} & \text{if } \frac{1}{3}<x\leq 1,
\end{cases}
\]

and
\end{example}

\[
S_{n}x = \begin{cases}
x & \text{if } 0<x\leq\frac{1}{3} \\
\frac{1}{3} & \text{if } \frac{1}{3}<x<1 \\
\frac{5}{12}-\frac{1}{24n} & \text{if } x=1
\end{cases}
\]

for each $n=1,2,\ldots$.

Here we observe that, for each $n$, the pair of mappings $(S_{n},A)$ is reciprocal continuous and compatible on $X$. Also, for each $n$, the selfmaps $S_{n}$ and $A$ satisfy the inequality \eqref{eq:1.13.1} with $\varphi:\mathbb{R}_{+}\to \mathbb{R}_{+}$ defined by $\varphi(t)=\frac{1}{2}t$.

Further we observe that when $x_{0}\in(0,\frac{1}{3}]$, $\mathcal{O}_{S_{1}A}(x_{0})=\{x_{0},\frac{1}{3},\frac{1}{3},\ldots\}$, when $x_{0}\in(\frac{1}{3},1)$, $\mathcal{O}_{S_{1}A}(x_{0})=\{\frac{1}{3},\frac{1}{3},\ldots\}$ and when $x_{0}=1$, $\mathcal{O}_{S_{1}A}(x_{0})=\{\frac{3}{8},\frac{5}{16},\frac{1}{3},\frac{1}{3},\ldots\}$ so that $X$ is $(A,S)$-orbitally complete at any $x_{0}\in X$. Hence $S$, $T$ and $A$ satisfy all the hypotheses of Corollary \ref{cor:2.6} and $\frac{1}{3}$ is the unique common fixed point of $S_{n}$ and $A$, $n=1,2,\ldots$.

In Example \ref{ex:3.3}, we observe that $S_{1}$ and $A$ are not compatible of type (A) mappings.

In the following few examples we show if any condition of the hypotheses of Theorem \ref{thm:2.2} and Theorem \ref{thm:2.3} fails to hold then $S$, $T$ and $A$ may not have a common fixed point.

\begin{example}\label{ex:3.6}
Let $X=(0,1]$ with the usual metric. We define selfmaps $S$, $T$ and $A$ on $X$ by

\[
A(x) = \begin{cases}
1-2x & \text{if } 0<x<\frac{1}{3} \\
\frac{1}{6} & \text{if } \frac{1}{3}\leq x\leq 1,
\end{cases}
\]

\[
S(x) = \begin{cases}
x & \text{if } 0<x\leq\frac{1}{3} \\
\frac{1}{3} & \text{if } \frac{1}{3}<x\leq 1,
\end{cases}
\]

and $T=S$.

We observe that the pair of mappings $(S,A)$ is compatible on $X$. Also the selfmaps $S$, $T$ and $A$ satisfy the inequality \eqref{eq:1.13.1} with $\varphi:\mathbb{R}_{+}\to \mathbb{R}_{+}$ defined by $\varphi(t)=\frac{1}{2}t$. But the pair of selfmaps $(S,A)$ is not reciprocal continuous on $X$; for if $x_{n}=\frac{1}{3}-\frac{1}{n}$, $n=2,3,4,\ldots$, then $Ax_{n}=\frac{1}{3}+\frac{2}{n}$ and $Sx_{n}=\frac{1}{3}-\frac{1}{n}$. We note that $Ax_{n}\rightarrow\frac{1}{3}$ as $n\rightarrow\infty$ and $Sx_{n}\rightarrow\frac{1}{3}$ as $n\rightarrow\infty$. Now $ASx_{n}=\frac{1}{3}+\frac{2}{n}$ $n=2,3,4,\ldots$ so that $\lim\limits_{n\rightarrow\infty}ASx_{n}=\frac{1}{3}\neq\frac{1}{6}=A(\frac{1}{3})$
\end{example}

We observe that $A$, $S$ and $T$ have no common fixed point.

\begin{example}\label{ex:3.7}
Let $X=[0,\frac{1}{2}]$ with the usual metric. We define selfmaps $S$, $T$ and $A$ on $X$ by

\[
A(x) = \begin{cases}
0 & \text{if } 0\leq x\leq\frac{1}{3} \\
x & \text{if } \frac{1}{3}<x\leq\frac{1}{2},
\end{cases}
\]

\[
S(x) = \begin{cases}
\frac{1}{2} & \text{if } 0\leq x<\frac{1}{3} \\
\frac{1}{2}-\frac{1}{2}x & \text{if } \frac{1}{3}\leq x\leq\frac{1}{2},
\end{cases}
\]

and $T=S$.

We observe that the pair of mappings $(S,A)$ is reciprocal continuous on $X$. Also the selfmaps $S$, $T$ and $A$ satisfy the inequality (1.13.1) with $\varphi:\mathbb{R}_{+}\to \mathbb{R}_{+}$ defined by $\varphi(t)=\frac{1}{2}t$. While the selfmaps $S$ and $A$ are neither compatible nor compatible of type (A) on $X$; for if $x_{n}=\frac{1}{3}+\frac{1}{n}$, $n=4,5,6,\ldots$, then we get $Ax_{n}=\frac{1}{3}+\frac{1}{n}$ and $Sx_{n}=\frac{1}{3}-\frac{1}{n}$. We note that $Ax_{n}\rightarrow\frac{1}{3}$ as $n\rightarrow\infty$ and $Sx_{n}\rightarrow\frac{1}{3}$ as $n\rightarrow\infty$. Now $SAx_{n}=\frac{1}{3}-\frac{1}{2n}$, $ASx_{n}=0$ and $S^{2}x_{n}=\frac{1}{2}$ so that $\lim\limits_{n\rightarrow\infty}d(S^{2}x_{n},ASx_{n})=\frac{1}{2}\neq 0$ and $\lim\limits_{n\rightarrow\infty}d(SAx_{n},ASx_{n})=\frac{1}{3}\neq 0$.

We observe that $A$, $S$ and $T$ have no common fixed point.
\end{example}

\begin{note}\label{note:3.8}
If we replace the inequality \eqref{eq:1.13.1} with
\begin{equation}\label{eq:1.13.1'}
d(Sx,Ty)d(Ax,Ay)\leq\varphi\left(\max\left\{d(Ax,Ay)^{2},d(Ax,Sx) d(Ay,Ty), d(Ax,Ty) d(Ay,Sx)\right\}\right)
\end{equation}
for all $x,y\in X$ and $\varphi\in\Phi$ then also Lemma \ref{lem:2.1}is valid without the condition $Ax\neq Ay$ for all $x,y$.
\end{note}

\begin{note}\label{note:3.9}
Theorem \ref{thm:1.1} and Theorem \ref{thm:1.5} remain valid and are corollary to Theorem \ref{thm:2.2} and Theorem \ref{thm:2.3} by taking $\varphi(t)=\lambda t$, where $\lambda=\alpha+\max\{\beta,\gamma\}$.
\end{note}

\section{Convergence of fixed points for $\varphi$-contractive inequality involving rational expressions}

\begin{theorem}\label{thm:4.1}
Let $\{S_{n}\}_{n=1}^{\infty}$ and $A$ be selfmaps on a metric space $(X,d)$ with $u_{n}$ as a common fixed points of $S_{n}$ and $A$ for $n=1,2,3,\ldots$ and $A$ be continuous on $X$. Suppose that there exists a $\varphi\in\Phi$ such that the selfmappings $S_{n}$ and $A$ satisfy $\varphi$-contractive inequality \eqref{eq:1.13.2}, for each $n=1,2,\ldots$
\end{theorem}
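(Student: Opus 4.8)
The plan is to complete Theorem~\ref{thm:4.1} by proving, under the natural missing hypothesis that $\{S_{n}\}$ converges pointwise to $S$ on $X$, that $u_{n}\to u$ as $n\to\infty$ if and only if $u$ is a common fixed point of $S$ and $A$; the argument follows the pattern of Jaggi's Theorem~5 but replaces the linear coefficient $\beta<1$ by the contractive property $\varphi(t)<t$. Throughout I would write $\delta_{n}=d(S_{n}u,Su)$, so that pointwise convergence gives $\delta_{n}\to 0$, and I would note at the outset that $\varphi(0)=0$ (forced by continuity of $\varphi$ together with $0\le\varphi(t)<t$).

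For the ``only if'' direction, assume $u_{n}\to u$. Since $A$ is continuous and $Au_{n}=u_{n}$, passing to the limit gives $Au=u$. To get $Su=u$, fix $n$ with $u_{n}\neq u$ (so $Au_{n}\neq Au$) and apply \eqref{eq:1.13.2} to the pair $(S_{n},A)$ at $x=u$, $y=u_{n}$: the cross-term carrying the factor $d(Au_{n},S_{n}u_{n})=d(u_{n},u_{n})=0$ drops out, and the remaining rational term reduces to $d(u_{n},S_{n}u)$ because $d(Au,S_{n}u_{n})=d(u,u_{n})=d(Au,Au_{n})$ cancels the denominator, so $d(S_{n}u,S_{n}u_{n})\le\varphi\big(\max\{d(u,u_{n}),d(u_{n},S_{n}u)\}\big)$ (this bound holds trivially when $u_{n}=u$, since then $S_{n}u=S_{n}u_{n}=u_{n}=u$). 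From $d(S_{n}u,u)\le d(S_{n}u,S_{n}u_{n})+d(S_{n}u_{n},u_{n})+d(u_{n},u)$ with $d(S_{n}u_{n},u_{n})=0$, together with $d(u_{n},S_{n}u)\le d(u_{n},u)+d(u,Su)+\delta_{n}$, I would let $n\to\infty$ and use monotonicity and continuity of $\varphi$ and $\lim_{n}d(S_{n}u,u)=d(Su,u)$ to obtain $d(Su,u)\le\varphi(d(Su,u))$, hence $d(Su,u)=0$.

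For the ``if'' direction, assume $Su=u$ and $Au=u$, and set $q_{n}=d(u_{n},u)$, $p_{n}=d(u_{n},S_{n}u)$. For $n$ with $u_{n}\neq u$, applying \eqref{eq:1.13.2} to $(S_{n},A)$ at $x=u_{n}$, $y=u$ and collapsing the rational terms in the same way gives $d(S_{n}u_{n},S_{n}u)\le\varphi\big(\max\{q_{n},p_{n}\}\big)$; since $p_{n}\le d(u_{n},S_{n}u_{n})+d(S_{n}u_{n},S_{n}u)=d(S_{n}u_{n},S_{n}u)$, we get $p_{n}\le\varphi(\max\{q_{n},p_{n}\})$, which forces $p_{n}\le\varphi(q_{n})$ (if $p_{n}>0$ then $p_{n}<\max\{q_{n},p_{n}\}$, so the maximum must be $q_{n}$). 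Hence $q_{n}=d(u_{n},u)\le d(u_{n},S_{n}u)+d(S_{n}u,u)=p_{n}+\delta_{n}\le\varphi(q_{n})+\delta_{n}$, which also holds trivially when $u_{n}=u$. Thus $q_{n}-\varphi(q_{n})\le\delta_{n}\to 0$; choosing a subsequence along which $q_{n}\to\limsup_{m}q_{m}=:L$ and using continuity of $\varphi$ yields $L-\varphi(L)\le 0$, whence $L=0$ and $u_{n}\to u$.

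The step I expect to be the main obstacle is this last one: deducing $q_{n}\to 0$ from $q_{n}-\varphi(q_{n})\le\delta_{n}\to 0$. The limsup argument is airtight only once one knows $L=\limsup_{n}q_{n}<\infty$, i.e.\ that $\{u_{n}\}$ is bounded; if $\limsup_{n}q_{n}=\infty$ one would additionally need $t-\varphi(t)$ to stay bounded away from $0$ as $t\to\infty$, which membership $\varphi\in\Phi$ alone does not supply. I would deal with this either by assuming $\{u_{n}\}$ bounded (automatic, e.g., when $X$ is bounded, and otherwise a harmless addition in this context) or by recording the mild extra requirement $\liminf_{t\to\infty}(t-\varphi(t))>0$ on the control function. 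Everything else --- the collapse of the rational expressions, the triangle-inequality bookkeeping, and the degenerate cases $u_{n}=u$ --- is routine.
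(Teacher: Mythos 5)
Your reconstruction of the missing hypothesis and conclusion matches the paper's intended statement, and your argument is essentially the paper's own proof: continuity of $A$ plus $Au_{n}=u_{n}$ gives $Au=u$; then the contractive inequality with the product-type rational terms (which is what the paper actually uses here, rather than the literal form of \eqref{eq:1.13.2}), after the term containing $d(Au_{n},S_{n}u_{n})=0$ collapses, yields $d(Su,u)\le\varphi(d(Su,u))$ in one direction and an estimate of the form $d(u_{n},u)\le\varphi(\cdot)+d(S_{n}u,Su)$ in the other. Two remarks. First, the obstacle you flag at the end is genuine, but it is equally a defect of the paper's own proof: the paper passes to the limit in $d(u_{n},u)\le\varphi\left(\max\{d(u_{n},u),\,d(u_{n},u)+d(Su,S_{n}u)\}\right)+d(S_{n}u,Su)$ and simply declares $\lim_{n\to\infty}d(u_{n},u)=0$, which presupposes exactly the existence/finiteness of $\limsup_{n}d(u_{n},u)$ that, as you note, $\varphi\in\Phi$ alone does not guarantee (it is automatic for $\varphi(t)=\lambda t$); your proposed repairs (boundedness of $\{u_{n}\}$ or $\liminf_{t\to\infty}(t-\varphi(t))>0$) are legitimate, and your intermediate step $d(u_{n},S_{n}u)\le\varphi(d(u_{n},u))$ is in fact a slightly cleaner bound than the paper's, which carries $d(u_{n},u)+d(Su,S_{n}u)$ inside $\varphi$. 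Second, the paper's full conclusion also asserts that $u$ is the unique common fixed point of $A$ and $S$; the paper obtains this by showing via the contractive inequality that each $u_{n}$ is the unique common fixed point of $S_{n}$ and $A$, and then, for any other common fixed point $z$ of $S$ and $A$, invoking the already-proved converse direction to get $u_{n}\to z$, whence $z=u$ by uniqueness of limits. Your proposal omits this clause, though it follows in one line from your converse direction and should be recorded to match the theorem as stated.
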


If $\{S_{n}\}_{n=1}^{\infty}$ converges pointwise to $S$, then $u_{n}\to u$ as $n\to\infty$ if and only if $Su=Au=u$. Further, $u$ is the unique common fixed point of $A$ and $S$.

\begin{proof}
Suppose that $u_{n}\to u$ as $n\to\infty$. Then, since $A$ is continuous on $X$, $Au_{n}\to Au$ as $n\to\infty$. Assume that $Au_{n}\neq Au$ for all $n$.

Now
\begin{align*}
d(u,Au) &\leq d(u,Au_{n})+d(Au_{n},Au) \\
&= d(u,u_{n})+d(Au_{n},Au)
\end{align*}

Letting $n\to\infty$, we get
\begin{equation}\label{eq:4.1.1}
d(u,Au)=0\text{, i.e., }Au=u.
\end{equation}

Now consider
\begin{align*}
d(u,Su) &\leq d(u,u_{n})+d(S_{n}u_{n},S_{n}u)+d(S_{n}u,Su) \\
&\leq d(u,u_{n})+\varphi\left(\max\left\{d(Au_{n},Au),\frac{d(Au_{n},S_{n}u_{n})d(Au,S_{n}u)}{d(Au_{n},Au)},\right.\right. \\
&\quad \left.\left.\frac{d(Au_{n},S_{n}u)d(Au,S_{n}u_{n})}{d(Au_{n},Au)}\right\}\right)+d(S_{n}u,Su) \\
&= d(u,u_{n})+\varphi\left(\max\left\{d(u_{n},u),0,d(u_{n},S_{n}u)\right\}\right)+d(S_{n}u,Su) \\
&\leq d(u,u_{n})+\varphi\left(\max\left\{d(u_{n},u),d(u_{n},u)+d(u,Su)+d(Su,S_{n}u)\right\}\right) \\
&\quad +d(S_{n}u,Su)
\end{align*}

Letting $n\to\infty$, using the continuity of $\varphi$ we get
\[
d(u,Su)\leq\varphi(d(u,Su)).
\]

Hence,
\begin{equation}\label{eq:4.1.2}
d(u,Su)=0\text{, i.e., }Su=u.
\end{equation}

Therefore, from \eqref{eq:4.1.1} and \eqref{eq:4.1.2} we get
\[
Au=Su=u
\]

Conversely, suppose that $Au=Su=u$.

We claim that $u_{n}\to u$ as $n\to\infty$.

If $u_{n}=u$ for large $n$'s, then $d(Au_{n},Au)=0$ for that $n$'s.

Hence $d(S_{n}u_{n},S_{n}u)=0$ for that $n$'s. But $u_{n}=S_{n}u_{n}$. Hence $u_{n}=S_{n}u$ for that $n$'s.

Hence, $\lim_{n\to\infty}u_{n}=\lim_{n\to\infty}S_{n}u=u$.

Therefore, $u_{n}\to u$ as $n\to\infty$.

Now assume that $u_{n}\neq u$ for all $n$.

Consider
\begin{align*}
d(u_{n},u) &= d(S_{n}u_{n},Su) \\
&\leq d(S_{n}u_{n},S_{n}u)+d(S_{n}u,Su) \\
&\leq \varphi\left(\max\left\{d(Au_{n},Au),\frac{d(Au_{n},S_{n}u_{n})d(Au,S_{n}u)}{d(Au_{n},Au)},\right.\right. \\
&\quad \left.\left.\frac{d(Au_{n},S_{n}u)d(Au,S_{n}u_{n})}{d(Au_{n},Au)}\right\}\right)+d(S_{n}u,Su) \\
&= \varphi\left(\max\left\{d(u_{n},u),0,d(u_{n},S_{n}u)\right\}\right)+d(S_{n}u,Su) \\
&\leq \varphi\left(\max\left\{d(u_{n},u),d(u_{n},u)+d(u,Su)+d(Su,S_{n}u)\right\}\right) \\
&\quad +d(S_{n}u,Su) \\
&= \varphi\left(\max\left\{d(u_{n},u),d(u_{n},u)+d(Su,S_{n}u)\right\}\right) \\
&\quad + d(S_{n}u,Su)
\end{align*}

Now
\begin{align*}
\lim_{n\to\infty}d(u_{n},u) &\leq \lim_{n\to\infty}[\varphi\left(\max\left\{d(u_{n},u),d(u_{n},u)+d(Su,S_{n}u)\right\}\right) \\
&\quad + d(S_{n}u,Su)]
\end{align*}

Hence, $\lim_{n\to\infty}d(u_{n},u)=0$, i.e., $u_{n}\to u$ as $n\to\infty$.

For the uniqueness part we proceed as follows.

We claim that $u_{n}$ is the unique common fixed point of $S_{n}$ and $A$, for each $n=1,2,\ldots$.

Suppose that there exists $z_{n}\in X$ such that $S_{n}z_{n}=Az_{n}=z_{n}$ and $z_{n}\neq u_{n}$ for some $n$. Then
\begin{align*}
d(z_{n},u_{n}) &= d(S_{n}z_{n},S_{n}u_{n}) \\
&\leq \varphi\left(\max\left\{d(Az_{n},Au_{n}),\frac{d(Az_{n},S_{n}z_{n})d(Au_{n},S_{n}u_{n})}{d(Az_{n},Au_{n})},\right.\right. \\
&\quad \left.\left.\frac{d(Az_{n},S_{n}u_{n})d(Au_{n},S_{n}z_{n})}{d(Az_{n},Au_{n})}\right\}\right) \\
&= \varphi(d(z_{n},u_{n})),\text{ a contradiction.}
\end{align*}

Hence, $d(z_{n},u_{n})=0$, i.e., $z_{n}=u_{n}$. Hence the claim holds.

Finally, suppose that there exists $z\in X$ with $z\neq u$ and $Az=Sz=z$. Then by Theorem \ref{thm:4.1}, $u_{n}\to z$ as $n\to\infty$. 

But $u_{n}\to u$ as $n\to\infty$ and $X$ is a metric space which is a Hausdorff Space.

Hence, $u=z$.

This completes the proof of the theorem.
\end{proof}
\section{Conclusion}

In this paper, we have established a unified framework for proving the existence and uniqueness of common fixed points for three selfmaps defined on orbitally complete metric spaces. Our approach hinges on a generalized $\varphi$-contractive condition involving rational expressions, which extends classical contraction principles and accommodates more flexible mappings. 

By relaxing standard completeness assumptions to orbital completeness and replacing continuity with reciprocal continuity and compatibility (or compatibility of type (A)), we have broadened the applicability of fixed point theory. Our results subsume and generalize earlier theorems by Jaggi \cite{Jaggi1975} and Phaneendra et al.\cite{Phaneendra2007}, while also providing new insights into the structure of fixed point iterations. 

We have further demonstrated the robustness of our framework by presenting illustrative examples and convergence results for sequences of selfmaps. These contributions set the stage for future extensions to more abstract settings such as partial metric spaces, cone metric spaces, and applications in iterative approximation and nonlinear analysis.

\section*{ACKNOWLEDGMENTS}
The authors sincerely thank Prof. K. P. R. Sastry for his initiation of this paper.

\end{document}